\definecolor{darkblue}{cmyk}{1,0,0,0.8}
\definecolor{darkred}{cmyk}{0,1,0,0.7}
\newcommand*\patchAmsMathEnvironmentForLineno[1]{%
  \expandafter\let\csname old#1\expandafter\endcsname\csname #1\endcsname
  \expandafter\let\csname oldend#1\expandafter\endcsname\csname end#1\endcsname
  \renewenvironment{#1}%
  {\linenomath\csname old#1\endcsname}%
  {\csname oldend#1\endcsname\endlinenomath}}%
\newcommand*\patchBothAmsMathEnvironmentsForLineno[1]{%
  \patchAmsMathEnvironmentForLineno{#1}%
  \patchAmsMathEnvironmentForLineno{#1*}}%
\DeclareMathOperator{\lip}{Lip}
\DeclareMathOperator{\rg}{rg}
\DeclareMathOperator{\Lin}{Lin}
\newcommand{\dd}{\mathop{}\!\mathrm{d}}
\newcommand{\lm}{{\ell_{\max}}}
\theoremstyle{plain}
\newtheorem{theorem}{Theorem}[section]
\newtheorem{lemma}[theorem]{Lemma}
\newtheorem{proposition}[theorem]{Proposition}
\newtheorem{corollary}[theorem]{Corollary}
\newtheorem{definition}[theorem]{Definition}
\newtheorem{assumption}[theorem]{Assumption}
\numberwithin{equation}{section}
\numberwithin{table}{section}
\numberwithin{figure}{section}
\newcommand{\R}{\mathbb{R}}
\DeclareMathOperator{\B}{B}
\DeclareMathOperator{\err}{Err}
\renewcommand{\i}{\mathrm{i}}
\newcommand{\e}{\mathrm{e}}
\newcommand{\gfde}{G_\mathrm{FDE}}
\title{Spectral element methods for boundary-value problems of functional differential equations}
\author{
Alessia And\`o$^{1}$, Jan Sieber$^{2}$
\\[.5em]
\small $^{1}$CDLab -- Computational Dynamics Laboratory\\[-.2em]
\small Department of Mathematics, Computer Science and Physics -- University of Udine\\[-.2em]
\small via delle scienze 206, 33100 Udine, Italy\\[-.2em]
\small \texttt{alessia.ando@uniud.it}\\[.5em]
\small $^{2}$Department of Mathematics and Statistics -- University of Exeter\\[-.2em]
\small North Park Road, Exeter EX4 4QF, U.\,K.\\[-.2em]
\small \texttt{j.sieber@exeter.ac.uk}
}
\date{\today}
\begin{document}
\clearpage
\maketitle
\thispagestyle{empty}
\begin{abstract}\noindent
   We prove convergence of the spectral element method for piecewise polynomial collocation applied to periodic boundary value problems for functional differential equations. In particular, we prove that the numerical collocation solution approximates the true solution with accuracy of order $\mathrm{e}^{-\eta m}$ for some $\eta>0$ and increasing degree $m$ of the polynomials for a case that is common in applications: differential equations where the right-hand side depends on a finite number of delayed arguments with parametric delays and real analytic coefficients. For state-dependent delays the spectral element method also converges under mild regularity assumptions, but the geometric convergence of the collocation solution depends on the properties of the true solution, which may in general not be real analytic even for analytic coefficients. However, in those cases the convergence rate is still higher than all finite orders.
\end{abstract}
\smallskip
\noindent{\bf Keywords:} functional differential equations, geometric convergence, periodic boundary-value problems, collocation methods, state-dependent delay, numerical bifurcation analysis

\smallskip
\noindent{\bf 2010 Mathematics Subject Classification:} 65L03, 65L10, 65L20, 65L60

\section{Introduction}
Periodic orbits are one of the possible attractors or invariant sets one wants to track when studying long-term dynamics of dynamical systems depending on several (varying or uncertain) parameters \citep{D07}. To find periodic orbits systematically, piecewise orthogonal collocation for boundary-value problems (BVPs) is used in continuation-based software packages that have been developed for the stability and bifurcation analysis of ordinary differential equations, such as \textsc{MatCont} \citep{DGK03}, \textsc{AUTO} \citep{D07} or \textsc{coco} \citep{DS13}. Software libraries such as \textsc{DDE-Biftool} \citep{ELR02,ddebiftoolmanual} or \textsc{knut} \citep{RS07} perform a subset of the possible bifurcation analysis for functional differential equations (FDEs, also called delay differential equations/DDEs). These libraries are used by engineers and scientists in application areas ranging from population or traffic modelling \citep{diekmann2010daphnia,gedeon2022operon,OWES10} to analysis of chatter in milling \citep{mohammadi2022chatter}. The current user interfaces of both libraries are able to handle FDEs defined by an arbitrary number of possibly state-dependent discrete delays in the arguments of the right-hand side of the differential equations.  However, the underlying 
collocation method, originally developed by \citet{ascher1981collocation} and applied to FDEs by \citet{ELHR01}, is applicable to more general types of delays.

\paragraph{Periodic BVPs for FDEs} We consider periodic orbits of dynamical systems with delayed arguments, modelled by FDEs, which are differential equations of the form
\begin{align}
  \label{intro:fde}
  y'(t)&=\gfde(y_t,p)\mbox{,}
\end{align}
with dependent variable $y:\R\to\R^{n_y}$, problem parameters $p\in\R^{n_p}$, and the continuous nonlinear functional
  $\gfde:C^0([-\tau_{\max},0];\R^{n_y})\times\R^{n_p}\to\R^{n_y}$
as right-hand side. We use $C^0(I;\R^n)$ for the space
of continuous functions from the interval $I$ into $\R^n$ (and, correspondingly, $C^\ell$ for $\ell$ times continuously differentiable functions), $C^\ell_\pi$ for spaces of functions that are $C^\ell$ on $\R$ with period $1$, and the notation for the time shift of a function by time $t$
\begin{align}\label{intro:timeshift}
    y_t=y(t+(\cdot)).
\end{align} 
When searching for periodic orbits of \eqref{intro:fde} of unknown period $T$, one rescales time and adds $n_p+1$ affine constraints to formulate
a ``square'' periodic FDE BVP
\begin{align}\label{res:bvp}
  y'(t)&=TG_\mathrm{FDE}(y(t+(\cdot)/T),p)=:G(y_t,\mu)\mbox{\quad ($t\in[0,1]$),}&
  0&=R_\mathrm{aff}[y,\mu],
\end{align}
for the unknowns $y\in C^1_\pi(\R^{n_y})$ and $\mu=(T,p)\in\R^{n_\mu}$ ($n_\mu=n_p+1$). The $y$ component of the unknown is now a periodic function of known period $1$, while the period of the orbit is explicitly included as a parameter in the unknown $\mu$. The new functional
$G:C^0_\pi(\R^{n_y})\times
\R^{n_\mu}\to\R^{n_y}$ is continuous, and the affine map $R_\mathrm{aff}:C^0_\pi(\R^{n_y})\times\R^{n_\mu}\to\R^{n_\mu}$ defines $n_\mu$ bounded affine constraints.
 Our arguments below assume that \eqref{res:bvp} has a solution $(y^*,\mu^*)\in C^1_\pi(\R^{n_y})\times\R^{n_\mu}$.

\paragraph{Collocation discretization} Collocation discretizes a BVP such as \eqref{res:bvp} by dividing the base interval $[0,1]$ into $L$ subintervals, approximating the solution $y^*$ by $y^{m,L}$, which is a polynomial of degree $m$ on each subinterval. One imposes the differential equation on $(y^{m,L},\mu^{m,L})$ at $m$ \emph{collocation points} in each subinterval, along with continuity and periodicity conditions on $y^{m,L}$. The \emph{finite element method} (or strategy, FEM) keeps the polynomial degree $m$ fixed and lets $L$ go to infinity, thus decreasing the length of subintervals proportional to $1/L$. With this strategy one can can hope for uniform convergence with errors $(y^*-y^{m,L},\mu^*-\mu^{m,L})$ of order $L^{-m}$ for $L\to\infty$ under suitable assumptions on $y^*$  and $G$. In contrast, the \emph{spectral element method} (SEM) keeps $L$ fixed and lets the degree $m$ go to infinity. This strategy requires a careful choice of collocation points, but it can potentially achieve exponential accuracy under favorable circumstances (in our case real analyticity of $y^*$): one may hope for errors of order $\e^{-\eta m}$ for a positive $\eta$.  The exponent $\eta$ depends on the mesh of subintervals and on the domain in the complex plane in which $y^*$ has an analytic extension \citep{boyd01,tref12}.

\paragraph{Existing convergence results} The question of convergence of collocation for periodic FDE BVPs and the FEM and SEM strategies has long been open and is not yet entirely settled. Engelborghs and Doedel (the authors of \textsc{DDE-BIFTOOL} and \textsc{AUTO}) analysed convergence for FDE BVPs in \citep{ED02}, but limited their analysis to linear constant-delay right-hand sides and assumed that the period $T$ is known, which is generally not the case for periodic orbits in applications. Much later, Maset  provided a general, abstract framework for the convergence of discretizations of  FDE BVPs \citep{mas15NM}. However, Maset's framework requires regularity of the right-hand side $G$ in the sense of classical Fr{\'e}chet differentiability (continuous differentiability with respect to arguments $y_t$ and $\mu$). This requirement is in general not satisfied for FDE \eqref{res:bvp}. \citet{andoSIAM2020} overcame this fundamental limitation of Maset's theory to prove convergence of the FEM strategy for \eqref{res:bvp} with constant delays. They exploited that for constant delay one may assume that $\gfde$ is continuously differentiable $m$ times. Thus, the only point at which these problems violate Maset's assumptions is the lack of continuous differentiability of $G$ with respect to $T$ in \eqref{res:bvp}. The convergence result \citep{andoSIAM2020} combines careful estimates of the dependence of $G$ on $T$ with Maset's theory \citep{mas15SINA2,mas15SINA1,mas15NM} to arrive at the expected error bounds proportional to $L^{-m}$ for $L\to\infty$. If the delays are permitted to depend on the state, $\gfde$ is not continuously differentiable, making the framework of Maset inapplicable. In general, $\gfde$ is only continuously differentiable in a weaker sense, a concept which \citet{asFEM} call \emph{mild differentiability}: $\gfde$ is $\ell$ times continuously differentiable in its arguments, if $y$ is in $C^\ell$ (see Definition~\ref{def:mild} below for full details). Using these weaker concepts \citet{asFEM} show that the FEM strategy converges with error proportional to $L^{-m}$ for $L\to\infty$, if  $\gfde$ is $m$ times mildly differentiable. 

\paragraph{Convergence of SEM strategy} Our work explores convergence of the SEM strategy for \eqref{res:bvp}, which had been left as an open problem even in the case of constant delays. There are two obstacles for proving convergence of SEM. First, the constants entering the error bounds in \citep{andoSIAM2020,asFEM} grow with the degree $m$ of the piecewise polynomial $y^{m,L}$. This is not a problem for the FEM, which keeps $m$ constant, but it causes an instability for SEM, which increases $m$. The second problem is that there is no concept of mild analyticity for right-hand sides $\gfde$ of FDEs. \citet{MPN14} showed that periodic orbits of FDEs can be non-analytic even if all coefficients in $\gfde$ ``look analytic'', because their series expansion diverges in some points. Our illustrative example \eqref{eq:quadratic}, with $\gfde(y,p)=-y(-p-y(0)-y(0)^2)$ for different parameters $p$, fits into the example class considered  by \citet{MPN14}.
As observed in \citep{andoSIAM2020}, its techniques for proving convergence do not lend themselves easily to proving convergence of SEM \citep{breda2005pseudospectral,T96}. One reason is the choice of working with spaces of non-periodic functions, as will be clarified in Section \ref{sec:convergence}.

\section{Main results}
\label{sec:result}
\subsection{Periodic BVPs for FDEs and mild differentiability}
\label{sec:res:mild}
Nonlinearities such as $G_\mathrm{FDE}$ or $G$ in \eqref{res:bvp} require modified concepts of continuous differentiability and local Lipschitz continuity to describe their regularity in the case of state-dependent delays, which we call \emph{mild differentiability} (identical to \cite{asFEM}) and a new \emph{extended local Lipschitz continuity} (needed for $DG$).
\begin{definition}[Mild differentiability and extended local Lipschitz continuity]\label{def:mild}
    A map $G:C^0(I;\R^{n_u})\to\R^{n_G}$ is called $\lm$ times mildly differentiable if for all $\ell\in\{0,\ldots,\lm\}$
    \begin{compactenum}
        \item The map $G\vert_{C^\ell}$ is $C^\ell$, and
        \item the map $C^\ell\times C^\ell\ni (u,\delta_u)\mapsto D^\ell G(u)(\delta_u)^\ell\in\R^{n_G}$ is continuously extendable to $C^\ell\times C^{\ell-1}$.
    \end{compactenum}
    Let $G:C^0(I;\R^{n_u})\to\R^{n_G}$ be mildly differentiable at least once. We say that
  $DG$ satisfies an extended local Lipschitz condition in
  $u\in C^2$ if there exist a constant $C>0$ and a radius $r>0$
  such that
\begin{align}
  \label{def:extlip:eq}
  |DG(u+v)y-DG(u)y|\leq C\|v\|_1\|y\|_1\mbox{\quad for all $v\in \B^1_r(0)\subset C^1$, $y\in C^1$.}
\end{align}
\end{definition}
In \eqref{def:extlip:eq} we use the notations \begin{align*}
    \|v\|_\ell&=\sup_{t\in I}\left\{|v(t)|,|v'(t)|,\ldots,|v^{(\ell)}(t)|\right\},&
    \B_r^\ell(w)&=\{v\in C^\ell:\|v-w\|_\ell\leq r\}
\end{align*} 
for the supremum norm in $C^\ell$ and a ball of radius $r$ for a ball of radius $r$ around $w$. Classical continuous differentiability of $G$ of order $2$ implies local Lipschitz continuity of $DG$. However, mild differentiability of $G$ of order $2$ only implies a weaker (``\emph{mild}'') local Lipschitz condition for $DG$ in $u$, which looks like \eqref{def:extlip:eq} but only applies to $v$ in a sufficiently small ball $\B_r^2(0)$ (in contrast to $\B_r^1(0)$ as required in \eqref{def:extlip:eq}). Both parts of Definition~\ref{def:mild} are also applicable to functionals depending on parameters, such as $\gfde$, by applying the definition to the functional $G:C^0(I;\R^{n_y+n_p})=C^0(I;\R^{n_y})\times C^0(I;\R^{n_p})\ni (y,p)\mapsto\gfde(y,p_{t_0})\in\R^{n_y}$ for an arbitrary time $t_0\in I$ (so, we treat the parameter $p$ as a function, which happens to be constant).
\begin{assumption}[Assumptions on the BVP]\label{res:ass}\
  \begin{compactenum}
  \item \label{res:ass:mdiff} \textup{(Mild differentiability)} The
    right-hand side $G_\mathrm{FDE}$ in the DDE
    \eqref{res:bvp} is mildly differentiable to order
    $\lm\geq 2$. 
  \item \label{res:ass:existence} \textup{(Existence of solution)} BVP \eqref{res:bvp} has a solution $(y^*,\mu^*)=(y^*,T^*,p^*)$.
  \item \label{res:ass:linear:inv} \textup{(Linear well-posedness)}
    The BVP \eqref{res:bvp}, linearized in $(y^*,T^*,p^*)$,
    has only the trivial solution $(\delta^y,\delta^T,\delta^p)=0$.
    \item \label{res:ass:ext:llip} \textup{(Extended local Lipschitz condition)} $DG$ satisfies the extended local Lipschitz condition \eqref{def:extlip:eq} in $(y^*_t,T^*,p^*)$ for all $t\in[0,1]$.
  \end{compactenum}
\end{assumption}
For a solution $(y^*,T^*,p^*)$ of BVP~\eqref{res:bvp} with $\lm$ times mildly differentiable $\gfde$, the component $y^*$ is in $C^{\lm+1}$ (see \cite{asFEM}), such that the linearizations used in points \ref{res:ass:linear:inv} and \ref{res:ass:ext:llip} exist.
\subsection{Convergence of solution of discretized BVP}
\label{sec:res:collocation}
For a polynomial collocation discretization the unknown function is a $1$-periodic
continuous piecewise polynomial $y^m$ on a mesh of $L$ subintervals given as
$0=t_0<\ldots<t_L=1$. More precisely, $y^m$ is a polynomial of degree $m$ on
$[t_{i-1},t_i]$ for all $i=1,\ldots,L$ in each of its $n_y$
components, $y^m$ is continuous, and $y(0)=y(1)$, such that we can extend $y^m$ to a function in $C^0_\pi$. The strategy we consider
for adjusting approximation quality is a spectral-element approach keeping the mesh size $L$ fixed,
and considering the limit $m\to\infty$. Thus, we do not include superscript $L$ for the variable $y^m$. Additional unknowns
are the parameters $\mu^m=(T^m,p^m)$, resulting in $n_y mL+n_p+1=n_ymL+n_\mu$ unknowns
overall.  The system of algebraic equations,
\begin{align}\label{res:bvp:disc}
  0&=(y^m)'(t_{i,j})-G(y^m_{t_{i,j}},\mu^m),&
    0&=R_\mathrm{aff}[y^m,\mu^m]
\end{align}
for $1\leq i\leq L$, $1\leq j\leq m$, evaluates the FDE at the
collocation points $t_{i,j}=t_{i-1}+(t_i-t_{i-1})t_{\mathrm{c},j}$,
where the points $t_{\mathrm{c},j}$ are the $m$ collocation points for
degree $m-1$ on the interval $[0,1]$ for a sequence of orthogonal
polynomials (e.g. Gauss-Legendre or Chebyshev nodes, as used in \textsc{DDE-Biftool}). We recall that, for a given set of $m+1$ nodes in the interval $[0,1]$, the Lebesgue constant is defined as 
\begin{equation*}
\Lambda_{m}=\max_{t\in[0,1]}\sum_{j=1}^m|l_{m,j}(t)|,
\end{equation*}
where $\{l_{m,1},\ldots,l_{m,m}\}$ is the Lagrange basis for the nodes $(t_{\mathrm{c},j})_{j=1}^m$. Our assumption on the discretization is that the Lebesgue constant, and, hence, the norm of the interpolation projection, diverges only slowly for functions in $C^0_\pi$.
\begin{assumption}[Slowly divergent interpolation constant]\label{ass:lebesgue}
The Lebesgue constant $\Lambda_m$ of the collocation point sets $(t_{\mathrm{c},j})_{j=1}^m$ on $[0,1]$ satisfies
  \begin{align*}
    \frac{\Lambda_m}{m}\to 0\mbox{\quad for $m\to\infty$.}
  \end{align*}
\end{assumption}
Spectral element strategies are most useful when $y^*$ is analytic, since in that case they promise exponential convergence, provided that certain bounds on all derivatives of
$y^*$ are satisfied. Assumption~\ref{ass:lebesgue} is quite restrictive, but includes the Chebyshev and Gauss-Legendre node sequences \cite{boyd01,tref12}. In particular,  the arguments by Trefethen \cite{tref12} imply that exponential accuracy (\emph{geometric convergence}) of interpolation follows from Assumption~\ref{ass:lebesgue} (see Section~\ref{sec:analytic} for more details):
\begin{proposition}[Exponential accuracy of interpolation]\label{thm:geom:app}
If the nodes $(t_{\mathrm{c},j})_{j=1}^m$ have  a slowly diverging Lebesgue constant $\Lambda_m$ according to Assumption~\ref{ass:lebesgue} and $v$ is analytic  on a complex neighborhood of $[0,1]$, then there exist constants $\eta>0$ and $C_\mathrm{sp}>0$ such that
\begin{align}
 \label{eq:approx:geom}
  \sup_{t\in[0,1]}|v(t)-v^m(t)|\leq C_\mathrm{sp}\e^{-\eta m}\mbox{\quad for all $m\geq1$,}
\end{align}
where $v^m$ is the unique polynomial of degree $m-1$ satisfying $v^m(t_{\mathrm{c},j})=v(t_{\mathrm{c},j})$ for all $j=1,\ldots,m$.
\end{proposition}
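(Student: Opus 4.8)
The plan is to combine the quasi-optimality of Lagrange interpolation, which is controlled by the Lebesgue constant, with the classical Bernstein estimate on best polynomial approximation of analytic functions. First I would observe that the map $v\mapsto v^m$ is the Lagrange interpolation projection $I_m$ onto the space $P_{m-1}$ of polynomials of degree at most $m-1$, and that its operator norm on $(C^0([0,1]),\|\cdot\|_\infty)$ equals exactly the Lebesgue constant $\Lambda_m$ of Assumption~\ref{ass:lebesgue}. Since $I_m$ fixes every element of $P_{m-1}$, for any best uniform approximant $p^*\in P_{m-1}$ of $v$ one has $v-v^m=(v-p^*)-I_m(v-p^*)$, which gives the standard quasi-optimality bound
\begin{align*}
  \sup_{t\in[0,1]}|v(t)-v^m(t)|\leq (1+\Lambda_m)\,E_{m-1}(v),
\end{align*}
where $E_{m-1}(v):=\inf_{p\in P_{m-1}}\|v-p\|_\infty$ denotes the best-approximation error.

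The geometric decay then comes from the analyticity of $v$. Because $v$ is analytic on an open complex neighborhood of the compact interval $[0,1]$, that neighborhood contains a Bernstein ellipse $E_\rho$ (the image of the circle of radius $\rho$ under the Joukowski map, affinely transported to have foci $0$ and $1$) for some $\rho>1$, on whose closure $v$ is bounded by some $M<\infty$. The classical Bernstein estimate, in the form quoted by \citet{tref12}, then yields
\begin{align*}
  E_{m-1}(v)\leq C\,\rho^{-m}
\end{align*}
for a constant $C$ depending only on $M$ and $\rho$; concretely this follows from the bound $|a_k|\leq 2M\rho^{-k}$ on the Chebyshev coefficients of $v$ and summation of the tail of the Chebyshev series.

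Combining the two displays gives $\sup_{t\in[0,1]}|v(t)-v^m(t)|\leq (1+\Lambda_m)\,C\,\rho^{-m}$, and it remains only to absorb the sub-exponential prefactor into the exponential rate. Assumption~\ref{ass:lebesgue} gives $\Lambda_m=o(m)$, so in particular $1+\Lambda_m$ grows more slowly than any exponential; hence for any fixed $\eta$ with $0<\eta<\log\rho$ there is a constant $C_\mathrm{sp}$ such that $(1+\Lambda_m)\,C\,\rho^{-m}\leq C_\mathrm{sp}\,\e^{-\eta m}$ for all $m\geq1$. This produces exactly the desired bound \eqref{eq:approx:geom}, with admissible rate any $\eta$ below $\log\rho$.

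I expect no deep obstacle, since the statement is classical; the only points requiring care are the bookkeeping in passing from the given neighborhood of $[0,1]$ to a concrete Bernstein ellipse (together with the affine change of variables from the standard interval $[-1,1]$ on which Bernstein's theorem is usually phrased), and the remark that Assumption~\ref{ass:lebesgue} is in fact stronger than needed here — any sub-exponential growth of $\Lambda_m$ would suffice for the analytic case — with the sharper $o(m)$ rate presumably reserved for the finite-smoothness estimates elsewhere in the paper.
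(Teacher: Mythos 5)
Your proposal is correct and follows essentially the same route as the paper: quasi-optimality via the Lebesgue constant, the Bernstein-ellipse estimate for the best approximation error of an analytic function (citing \citet{tref12}), and absorption of the sub-exponential factor $1+\Lambda_m$ into a slightly smaller exponential rate $\eta<\log\rho$. Your closing remark is also accurate --- the paper likewise needs only sub-exponential growth of $\Lambda_m$ here, reserving the sharper $o(m)$ condition of Assumption~\ref{ass:lebesgue} for the finite-smoothness and stability estimates in Section~\ref{sec:convergence}.
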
  
We can now state the main convergence theorem for discretized
BVP \eqref{res:bvp:disc}.
\begin{theorem}[Convergence of discretization]\label{res:thm:conv}
  Under Assumption~\ref{res:ass} on FDE BVP~\eqref{res:bvp} and Assumption~\ref{ass:lebesgue} on node sequence $(t_{\mathrm{c},j})_{j=1}^m$ the discretized BVP
  \eqref{res:bvp:disc} with mesh $(t_i)_{i=0}^L$ has a locally
  unique solution $x^m=(y^m,T^m,p^m)$ near solution $x^*=(y^*,T^*,p^*)$ of BVP \eqref{res:bvp} for all
  sufficiently large $m$. The solution satisfies
  \begin{align}\label{res:thm:conv:est}
    \|x^m-x^*\|_{0,1}:=\max\{\|y^m-y^*\|_{0,1},|T^m-T^*|,|p^m-p^*|\}=O\left(\frac{\Lambda_m}{m^{\lm}}\right).
  \end{align}
  Moreover, if $y^*$ has an analytic extension to a complex neighborhood of $[0,1]$, then
    \begin{align}\label{res:thm:conv:exp}
    \|x^m-x^*\|_{0,1}=O\left(\e^{-\eta m}\right),
  \end{align}
  for some $\eta>0$ which depends on $x^*$ and the mesh $(t_i)_{i=0}^L$.
\end{theorem}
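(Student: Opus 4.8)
The plan is to recast both the exact BVP~\eqref{res:bvp} and its collocation discretization~\eqref{res:bvp:disc} as operator equations and then to run a quantitative Newton--Kantorovich argument assembled from three ingredients: \emph{consistency} (the exact solution solves the discrete equations up to a small, controllable residual), \emph{stability} (the linearized discrete operator is boundedly invertible uniformly in $m$), and a uniform Lipschitz bound on the discrete linearization. Concretely, I would write the exact problem as $F(x)=0$ with $F(y,\mu)=\bigl(y'-G(y_\cdot,\mu),\,R_\mathrm{aff}[y,\mu]\bigr)$ on $X=C^1_\pi(\R^{n_y})\times\R^{n_\mu}$, let $X_m\subset X$ be the space of $1$-periodic continuous piecewise polynomials of degree $m$ on the mesh $(t_i)$ (together with $\R^{n_\mu}$), and let $\Pi_m$ denote a bounded interpolation projection onto $X_m$. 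With $\intp$ the interpolation operator at the collocation nodes (onto piecewise polynomials of degree $m-1$, whose operator norm is of order $\Lambda_m$), the system~\eqref{res:bvp:disc} is equivalent to $F_m(x_m)=0$ with $F_m(y^m,\mu^m)=\bigl((y^m)'-\intp[G(y^m_\cdot,\mu^m)],\,R_\mathrm{aff}[y^m,\mu^m]\bigr)$. The goal is then a fixed point of the frozen Newton map $\Phi_m(x)=x-L_m^{-1}F_m(x)$, where $L_m:=DF_m(x^*_m)$ and $x^*_m:=(\Pi_m y^*,\mu^*)$, on a small ball around $x^*_m$.

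For consistency I would estimate $\|F_m(x^*_m)\|$. Since $x^*$ solves~\eqref{res:bvp}, we have $(y^*)'=G(y^*_\cdot,\mu^*)$, so the residual decomposes into the interpolation error of $(y^*)'$ under $\intp$ and a remainder $\intp[G(y^*_\cdot,\mu^*)-G((\Pi_m y^*)_\cdot,\mu^*)]$ that the continuity of $G$ bounds by $\|y^*-\Pi_m y^*\|$. By the mild differentiability assumption with $\lm\geq2$ the solution component satisfies $y^*\in C^{\lm+1}$, hence $(y^*)'\in C^{\lm}$; a Jackson-type bound combined with the Lebesgue constant then gives an interpolation error of order $\Lambda_m/m^{\lm}$, so that $\|F_m(x^*_m)\|=O(\Lambda_m/m^{\lm})$. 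If $y^*$ extends analytically to a complex neighbourhood of $[0,1]$ then so does $(y^*)'$, and Proposition~\ref{thm:geom:app} upgrades this residual to $O(\e^{-\eta m})$. It is crucial that consistency rests on the regularity of $y^*$ itself rather than on any smoothness of $G$, which is classically unavailable in the state-dependent case.

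The crux is stability: I must show $L_m$ is invertible with $\|L_m^{-1}\|$ bounded uniformly in $m$. The continuous linearization $L=DF(x^*)$, sending $(\delta y,\delta\mu)$ to $\bigl((\delta y)'-DG(y^*_\cdot,\mu^*)(\delta y_\cdot,\delta\mu),\,DR_\mathrm{aff}[\delta y,\delta\mu]\bigr)$, is boundedly invertible by the linear well-posedness part of Assumption~\ref{res:ass}: in integrated form the delayed operator $DG(y^*_\cdot,\mu^*)$ becomes compact on $C^0_\pi$ after composition with integration, exhibiting $L$ as identity-minus-compact, so injectivity upgrades to bounded invertibility by Fredholm theory. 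The plan is to transfer this to $L_m$ by a perturbation argument comparing $L_m$ with $L|_{X_m}$. The decisive technical estimate, carried out in the integrated form of the equations, bounds the deviation of $L_m$ from $L$ on $X_m$ by $O(\Lambda_m/m)$, obtained by controlling the collocation projection $\intp$ applied to the delayed operator after integration; Assumption~\ref{ass:lebesgue} then forces $\Lambda_m/m\to0$, and a Banach/Neumann-series argument makes $L_m$ invertible for all large $m$ with uniformly bounded inverse. Establishing this estimate is exactly where the non-periodic analysis of \cite{andoSIAM2020} stalls and where restricting to periodic spaces is decisive; I expect it to be the main obstacle of the whole proof.

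Finally, I would check that $DF_m$ is Lipschitz with an $m$-independent constant on a $C^1$-ball of fixed radius around $x^*_m$: the only non-affine contribution is $DG$, whose variation between nearby base points is controlled by the extended local Lipschitz condition~\eqref{def:extlip:eq} of Assumption~\ref{res:ass}, precisely because that condition is posed on a $C^1$-ball $\B^1_r(0)$ rather than on the smaller $C^2$-ball that mild differentiability alone would provide --- the Newton iterates and the base-point shift $x^*_m-x^*$ are controlled in $C^1$ but not uniformly in $C^2$. With $\|L_m^{-1}\|$ bounded, $\|F_m(x^*_m)\|$ small, and a uniform Lipschitz constant, $\Phi_m$ is a contraction on a ball of radius $O(\|F_m(x^*_m)\|)$ about $x^*_m$, producing a locally unique fixed point $x_m$ with $\|x_m-x^*_m\|=O(\|F_m(x^*_m)\|)$. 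Combining with the interpolation error $\|x^*_m-x^*\|$ through the triangle inequality yields~\eqref{res:thm:conv:est}, and the analytic bound~\eqref{res:thm:conv:exp} follows from the geometric residual estimate. As noted, the stability step is the chief difficulty, with the Lipschitz step a close second, since every estimate must be kept in the $C^1$ topology where~\eqref{def:extlip:eq} applies.
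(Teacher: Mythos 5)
Your overall architecture (consistency + stability + contraction, in integrated/periodic form) matches the paper's, and your consistency and stability steps are essentially the paper's Lemma~\ref{thm:cons:geom} and Lemma~\ref{thm:stability} (the paper linearizes at $x^*$ rather than at the interpolant $x^*_m$, but that difference is cosmetic). However, there is a genuine gap in your third ingredient. You claim that $DF_m$ is Lipschitz on a $C^1$-ball of \emph{fixed} radius with an \emph{$m$-independent} constant, on the grounds that the only non-affine contribution is $DG$ and its variation is controlled by \eqref{def:extlip:eq}. This overlooks that the base-point variation of $DG$ enters $DF_m$ only after composition with the collocation interpolation $\intp$, whose $C^0\to L^\infty$ norm is of order $\Lambda_m\to\infty$ (you yourself use this norm in the consistency step). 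Concretely,
\begin{align*}
  \bigl[DF_m(x_1)-DF_m(x_2)\bigr]\delta
  = \Bigl(-\intp\bigl[\bigl(DG(x_{1,\cdot})-DG(x_{2,\cdot})\bigr)\delta_\cdot\bigr],\,0\Bigr),
\end{align*}
and since $\bigl(DG(x_{1,t})-DG(x_{2,t})\bigr)\delta_t$ is, for state-dependent delays, merely a continuous function of $t$ (not a piecewise polynomial), the best available bound is $C\,\Lambda_m\|x_1-x_2\|_1\|\delta\|_1$. So the Lipschitz constant of the discrete linearization grows like $\Lambda_m$; there is no uniform bound. This is precisely the obstruction the paper identifies as the main new difficulty of SEM over FEM (no uniform bound on $\|\mathcal{P}_m\|_{L^\infty_\mathrm{e}\leftarrow C^0_\mathrm{e}}$), and it is the reason Assumption~\ref{ass:extlip} alone does not close the argument the way you state it.

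The gap is repairable, and the repair is exactly what the paper does in Lemma~\ref{thm:nonlin:geom} and the proof of Theorem~\ref{thm:conv}: accept the Lipschitz constant $C_\mathrm{nl}\Lambda_m$ of the nonlinearity remainder, shrink the contraction ball to radius $r_m\sim\kappa/(C_\mathrm{stab}C_\mathrm{nl}\Lambda_m)$, and then verify that the stability constant times the consistency error still fits inside this shrinking ball, i.e.\ that
$C_\mathrm{stab}^2 C_\mathrm{nl} C_\mathrm{cs}\,\Lambda_m(\Lambda_m+1)/m^{\lm}\to 0$,
which holds because $\lm\geq 2$ and $\Lambda_m=o(m)$ by Assumption~\ref{ass:lebesgue} (and analogously $\Lambda_m\e^{-\eta m}\to0$ in the analytic case). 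In Newton--Kantorovich language: the product (stability)$\times$(residual)$\times$(Lipschitz constant of the derivative) still tends to zero even though the last factor diverges, so your scheme goes through once you track the $\Lambda_m$; but as written, your claim of a uniform Lipschitz constant is false, local uniqueness then only holds in balls whose radius shrinks like $1/\Lambda_m$ (as in the paper's statement of Theorem~\ref{thm:conv}), and the quantitative smallness condition that replaces uniformity is the step your proposal is missing.
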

\noindent
The norm $\|y^m-y^*\|_{0,1}$ in \eqref{res:thm:conv:est} is the Lipschitz norm $\|z\|_{0,1}=\max\{|z(t)|, |z(t)-z(s)|/|t-s|:t,s\in[0,1],s\neq t\}$.

We observe that the discretized solution converges faster than with order $\lm-1$ if $\lm$ is finite (in particular, it converges if $\lm=1$). If the solution of the original BVP \eqref{res:bvp} is analytic in a neighborhood of $[0,1]$ the discretization error decreases exponentially with increasing degree $m$. Theorem~\ref{res:thm:conv} implies that one could choose $L=1$ (so, a single mesh interval). However, for analytic solutions $y^*$ that have singularities near the real interval $[0,1]$ the constants involved in the convergence, such as $q$, can be improved by choosing a mesh with $L>1$ if the mesh boundaries are close to the singularities of $y^*$.

The geometric convergence toward analytic solutions makes the SEM an appealing method for finding periodic orbits in FDEs. In the case of constant delays, periodic orbits of FDEs with analytic coefficients are analytic \citep{N73}. However, the problem of analyticity of periodic orbits of state-dependent FDEs is more intricate as on a periodic orbits of an FDE with a discrete state-dependent delay the delay as a function of time is periodic. For examples from this class \citet{MPN14} have demonstrated a mechanism where the periodic orbit may be $C^\infty$ but not analytic. In those cases Theorem~\ref{res:thm:conv} still implies convergence beyond all finite orders (that is, faster than $O(m^{-\ell})$ for all $\ell$). We comment on non-analyticity at the end of Section \ref{sec:analytic} and demonstrate rates of convergence numerically for an illustrative example in Section~\ref{sec:tests}.

\section{Fixed-point problem equivalent to periodic BVP and relevant notation}
\label{sec:formulation}
\subsection*{Basic notation} Firming up the notation of Section~\ref{sec:result} we use the spaces and norms
\begin{align*}
  C^{k\phantom{,1}}_\pi&\!=\{v:\mbox{\ $k\times$\,cont.\,diff.,\,} v(t)=v(t+1)\mbox{\ for all $t$}\},& 
\|v\|_{k\phantom{,1}}&\!=\max_{t\in[0,1],j\leq k}|v^{(j)}(t)|,\\
  C^{k,1}_\pi&=\{v\in C^k_\pi,\lip v^{(k)}<\infty\},& 
\|y\|_{k,1}&\!=\max\{\|v\|_k,\lip v^{(k)}\},\\
L^\infty_\pi&=\{v\mbox{\ ess.bd.,\ } v(t)=v(t+1)\mbox{\ for all $t$}\},& 
\|v\|_\infty\ &=\operatorname{ess\,sup}_{t\in[0,1]}|v(t)|.
\end{align*}
The dimension of the function's value $v(t)$ is determined by
context such that we often drop domain and codomain indicators in the
spaces. Otherwise we write, e.g., $C^{k,j}_\pi(\R^{n_y})$ or $L^\infty_\pi(\R^{n_y})$.
We will append some finite-dimensional components $v^0\in\R^{n_y}$ and $\mu\in\R^{n_\mu}$ to elements of the above function spaces later. All norms of spaces for $v$ can be trivially
extended by the finite-dimensional maximum norms of these finite-dimensional components. Hence, we also define the extended
spaces
\begin{align*}
  C^{k(,1)}_\mathrm{e}&=C^{k(,1)}_\pi\times\R^{n_y}\times\R^{n_\mu},&
  L^\infty_\mathrm{e}&=L^\infty_\pi\times\R^{n_y}\times\R^{n_\mu},
\end{align*}
and continue to use the $\|\cdot\|_{k(,1)}$ or $\|_\cdot\|_\infty$
notation for their respective norms. The isomorphism $J_{10}$ and its inverse $J_{01}$,
\begin{align*}
    J_{10}\begin{bmatrix}
        v(\cdot)\\v^0\\\mu
    \end{bmatrix}&=\begin{bmatrix}
        t\mapsto v^0+\int_0^t v(s)\dd s-t\int_0^1 v(s)\dd s\\\int_0^1v(s)\dd s\\\mu
    \end{bmatrix},&
    J_{01}\begin{bmatrix}
        v(\cdot)\\v^0\\\mu
    \end{bmatrix}&=\begin{bmatrix}
        v'+v^0\\v(0)\\\mu
    \end{bmatrix},
\end{align*} 
map between these spaces increasing and decreasing regularity $k$: $J_{10}:C^{k(,1)}_\mathrm{e}\to C^{k+1(,1)}_\mathrm{e}$ and $L^\infty_\mathrm{e}\to C^{0,1}_\mathrm{e}$. 
We denote a  closed ball of radius $r$ around a $u\in C^{k(,1)}_\mathrm{e}$ in the $\|\cdot\|_{k(,1)}$-norm (or in $L^\infty$ with $\|\cdot\|_\infty$-norm) by
\begin{align*}
    \B_r^{k(,1)}(u)=\left\{v\in C^{k(,1)}_\mathrm{e}:\|v-u\|_{k(,1)}\leq r\right\},&
    \B_r^\infty(u)=\left\{v\in L^\infty_\mathrm{e}:\|v-u\|_\infty\leq r\right\}.
\end{align*}
\paragraph{Infinite-dimensional fixed point problem} Similar to \cite{asFEM}, we reformulate \eqref{res:bvp} as a fixed-point problem for an operator $\Phi$ in a space of $1$-periodic
functions. We append a $n_y$-dimensional ``boundary condition'' $y(0)=y^0$ and an additional variable $y^0\in\R^{n_y}$ to the differential equation in \eqref{res:bvp}, considering the BVP
\begin{align}
    \label{gen:bvp+bc}
    y'(t)&=G(y_t,\mu)\mbox{\quad ($t\in(0,1)$),}&y(0)&=y^0,&0&=R_\mathrm{aff}[y,\mu].
\end{align} 
By applying $J_{10}$ to BVP \eqref{gen:bvp+bc}, we transform it equivalently to the fixed-point problem
\begin{align}
  \label{gen:fixedpoint}
  x=\Phi(x),
\end{align}
for the operator $\Phi$, defined by
\begin{equation}\label{Phi1}
\Phi_{}(x):=
\begin{bmatrix*}[l]
t\mapsto&v^0+\displaystyle\int_0^tG(v_s,\mu)\dd s-t\int_0^1G(v_s,\mu)\dd s\\[1ex]
&v^0 + \displaystyle\int_0^1G(v_s,\mu)\dd s\\[1ex]
&\mu+R_\mathrm{aff}[v,\mu]
\end{bmatrix*}\mbox{\quad for }x=\begin{bmatrix}
    v(\cdot)\\v^0\\\mu
\end{bmatrix}\mbox{,}
\end{equation}
where $v$ is $1$-periodic with $v(t)\in\R^{n_y}$, $v^0\in\R^{n_y}$,
$\mu\in\R^{n_\mu}$. 
Following \cite{asFEM} we split the operator $\Phi$,
defined in \eqref{Phi1}, into a compact linear part $\mathcal{L}$ and
a nonlinear part $g$, such that $\Phi=\mathcal{L}\circ g$:
\begin{align}\label{g_ext}
  g&:C^{0,1}_\mathrm{e}\to C^0_\mathrm{e}\mbox{,}&&\mbox{with}&
  g
  \begin{pmatrix*}[l]
      v\\v^0\\\mu
    \end{pmatrix*}
  &=
  \begin{bmatrix}
    t\mapsto G(v_t,\mu)\\
    v^0\\
    \mu+R_\mathrm{aff}[v,\mu]
  \end{bmatrix}\mbox{,\quad and}\\
  \mathcal{L}&:
  \begin{aligned}[t]
    &\hspace*{1em}L^{\infty}_\mathrm{e}\to C^{0,1}_\mathrm{e}\\
    \makebox[0pt]{or\qquad }&C^{\ell(,1)}_\mathrm{e}\to C^{\ell+1(,1)}_\mathrm{e},
  \end{aligned}
  &&\mbox{with}&
  \label{integral_ext}
  \mathcal{L}
  \begin{bmatrix*}[l]
    w\\w^0\\\nu
  \end{bmatrix*}\hspace*{0.2em}
  &=\begin{bmatrix*}[l]
      0\\w^0\\0
  \end{bmatrix*}+J_{10}
  \begin{bmatrix*}[l]
    w\\w^0\\\nu
  \end{bmatrix*}
\end{align}
\paragraph{Discretized fixed-point problem}
For functions $v\in C^0_\pi$ we define the interpolation
projection $P_mv$ as the unique piecewise
polynomial on mesh $(t_i)_{i=0}^L$ of degree $m-1$ where the piece on each interval $(t_{i-1},t_i)$ equals $v$ on the nodes $t_{i,j}$:
\begin{align}\label{def:interp:proj}
  P_m:&\phantom{=\ }C^0_\pi\mapsto L^\infty_\pi,\\
  \nonumber
  P_mv&=\hat{v}\quad \begin{aligned}[t]
    &\mbox{with $\hat{v}(t_{i,j})=v(t_{i,j})$ for all $i\in\{1,\ldots,L\},j\in\{1,\ldots,m\}$,}\\
    &\mbox{and\ $\hat{v}$ is a degree $m-1$ polynomial on $[t_{i-1},t_i)$ for all $i\in\{1,\ldots,L\}$.}
\end{aligned}
\end{align}
In the name $P_m$ we do not indicate the dependence on the mesh size $L$ as we will keep this size constant, studying only the limit $m\to \infty$ in our convergence analysis. Note that $t\mapsto [P_mv](t)$ can in general not be expected to be continuous as it may have discontinuities at the mesh boundaries
$t_i$, such that the codomain of $P_m$ is $L^\infty_\pi$. Assumption~\ref{ass:lebesgue} and Proposition~\ref{thm:geom:app} imply the following properties for $P_m$.
\begin{corollary}[Convergence of interpolation projection with finite mesh]\label{thm:pmconv}
    Assume that the interpolation nodes $(t_{\mathrm{c},j})_{j=1}^m$ satisfy Assumption~\ref{ass:lebesgue} on the growth of the interpolation error on $[0,1]$. Then there exists a constant $C_\Lambda>0$ such that
    \begin{align}\label{thm:lebesgue:pm}
        \sup_{v\in C^0_\pi}\frac{\|v-P_mv\|_\infty}{m\|v\|_0}\leq C_\Lambda\frac{1+\Lambda_m}{m}\to0\mbox{\quad for $m\to\infty$.}    
    \end{align}
    If $v\in C^0_\pi$ has an analytic extension into a complex neighborhood of $[0,1]$ then there exist constants $\eta>0$ and $C_\mathrm{sp}>0$ such that
    \begin{align}
        \label{thm:exp:pm}
        \|v-P_mv\|_\infty\leq C_\mathrm{sp}\e^{-\eta m}\|v\|_0\mbox{\quad for all $m\geq1$.}
    \end{align}
\end{corollary}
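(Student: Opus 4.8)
The plan is to reduce both statements to per-subinterval estimates and then combine over the fixed, finite number $L$ of mesh intervals. The structural observation I would use throughout is that the restriction of $P_m$ to each subinterval $[t_{i-1},t_i]$ is, after the affine change of variables $s\mapsto t_{i-1}+(t_i-t_{i-1})s$ carrying $[0,1]$ onto $[t_{i-1},t_i]$, exactly the interpolation operator at the nodes $(t_{\mathrm{c},j})_{j=1}^m$ on $[0,1]$. Since both the Lebesgue constant and analyticity on a complex neighborhood are invariant under such an affine rescaling, this lets me import the $[0,1]$-results (the definition of $\Lambda_m$ and Proposition~\ref{thm:geom:app}) onto every subinterval with the same $\Lambda_m$.

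For the bound \eqref{thm:lebesgue:pm}, I would first recall the standard identity that the operator norm of interpolation at $m$ nodes, acting on $C^0$ equipped with the supremum norm, equals the Lebesgue constant $\Lambda_m$. Applied on each subinterval and maximized over the finitely many pieces (all sharing the same $\Lambda_m$), this gives $\|P_m v\|_\infty\le\Lambda_m\|v\|_0$ for every $v\in C^0_\pi$. A triangle inequality $\|v-P_mv\|_\infty\le\|v\|_\infty+\|P_mv\|_\infty$ together with $\|v\|_\infty=\|v\|_0$ for continuous periodic $v$ then yields $\|v-P_mv\|_\infty\le(1+\Lambda_m)\|v\|_0$, i.e. the claim with $C_\Lambda=1$. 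Dividing by $m\|v\|_0$ and taking the supremum over $v$ produces the displayed quotient, and $\tfrac{1+\Lambda_m}{m}=\tfrac1m+\tfrac{\Lambda_m}{m}\to0$ is immediate from Assumption~\ref{ass:lebesgue}.

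For the geometric bound \eqref{thm:exp:pm}, I would fix an analytic extension of $v$ to a complex neighborhood of $[0,1]$; its restriction to a complex neighborhood of each $[t_{i-1},t_i]$ is analytic, and after rescaling to $[0,1]$ it remains analytic on a complex neighborhood. Proposition~\ref{thm:geom:app} then applies on each subinterval and supplies constants $\eta_i>0$, $C_i>0$ with $\sup_{t\in[t_{i-1},t_i]}|v(t)-P_mv(t)|\le C_i\e^{-\eta_i m}$. Because the mesh is fixed with $L<\infty$ intervals, I can set $\eta=\min_i\eta_i>0$ and $C_\mathrm{sp}=\max_i C_i$; taking the maximum over the finitely many subintervals (which computes $\|v-P_mv\|_0$, the quantity being continuous on each closed piece) gives $\|v-P_mv\|_0\le C_\mathrm{sp}\e^{-\eta m}$.

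Neither step presents a genuine analytic obstacle; the content is entirely in the cited facts (the Lebesgue-constant operator-norm identity and Proposition~\ref{thm:geom:app}) together with Assumption~\ref{ass:lebesgue}. The only points requiring care — and thus the closest thing to a \emph{hard part} — are the bookkeeping ones: verifying that the per-subinterval Lebesgue constants all coincide with $\Lambda_m$ under affine rescaling, that analyticity survives that rescaling, and that the finiteness of $L$ is precisely what upgrades the subinterval rates $\eta_i$ to a single positive exponent $\eta$. This last point is exactly why the mesh must be kept fixed rather than refined.
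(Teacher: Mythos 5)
Your proof is correct and takes essentially the same route as the paper: the paper states this corollary without a separate proof, treating it as immediate from Assumption~\ref{ass:lebesgue} and Proposition~\ref{thm:geom:app}, and the paper's own derivation of that proposition in Section~\ref{sec:analytic} is precisely your per-subinterval argument (affine rescaling to $[0,1]$, the near-best-approximation bound $\|v-P_mv\|_\infty\le(1+\Lambda_m)\,\mathrm{dist}(v,\mathbb{P}_{m-1})$ via the Lebesgue constant, and taking the worst rate over the finitely many mesh intervals). Your elementary triangle-inequality bound with $C_\Lambda=1$ for \eqref{thm:lebesgue:pm} and the $\eta=\min_i\eta_i$, $C_\mathrm{sp}=\max_i C_i$ combination for \eqref{thm:exp:pm} are exactly the bookkeeping the paper leaves implicit.
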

We extend the projection $P_m$ trivially similar to the spaces $C^{k,j}_\pi$ and the operator $\mathcal{L}$ for the additional finite-dimensional components: 
    \begin{align}\label{def:interp:PL}
    \mathcal{P}_m:C^0_\mathrm{e}\to
       L^\infty_\mathrm{e},\qquad
    \mathcal{P}_m
    \begin{bmatrix*}[l]
      w\\w^0\\\nu
    \end{bmatrix*}
    =
       \begin{bmatrix*}[l]
         P_mw\\
         w^0\\
         \nu
       \end{bmatrix*}.
  \end{align}
We also extend the discretized BVP \eqref{res:bvp:disc} by a boundary condition and an additional variable $y^{0,m}\in\R^{n_y}$ in the same way as the FDE BVP \eqref{res:bvp}:
\begin{align}\label{gen:bvp+bc:disc}
  0&=(y^m)'(t_{i,j})-G(y^m_{t_{i,j}},\mu^m),&
  y^m(0)&=y^{0,m},&
    0&=R_\mathrm{aff}[y^m,\mu^m].
\end{align}
Considering first component of \eqref{gen:bvp+bc:disc} (the discretized FDE) as an identity for piecewise polynomials in $\rg P_m$, the extended discretized BVP \eqref{gen:bvp+bc:disc} is an identity in $\rg\mathcal{P}_m\subset L^\infty_\mathrm{e}$. Thus, we can apply $J_{10}$ to this identity to construct the fixed point problem
\begin{align}
  \label{thm:disc:fp:eq}
    x^m&=\mathcal{L}\mathcal{P}_mg(x^m)=:\Phi_m(x^m)\mbox{\quad for $x^m=(y^m,y^{0,m},\mu^m)$,}
\end{align}
equivalent to \eqref{gen:bvp+bc:disc},
where $\mathcal{L}:L^\infty_\mathrm{e}\to C^{0,1}_\mathrm{e}$ and $g:C^{0,1}_\mathrm{e}\to C^0_\mathrm{e}$ are defined in \eqref{g_ext} and \eqref{integral_ext}, and $\mathcal{P}_m$ in \eqref{def:interp:PL}. Hence, discretization of the full FDE BVP \eqref{gen:bvp+bc} corresponds to inserting projection $\mathcal{P}_m$ into the equivalent fixed-point problem \eqref{gen:fixedpoint}.

\paragraph{Numerical representation of the piecewise polynomials}
The variable for our discretized fixed-point problem is $x^m=(y^m,y^{0,m},\mu^m)\in\rg J_{10}\mathcal{P}_m\subset C^0_\mathrm{e}$, such that $y^m$ is in the space of continuous $1$-periodic functions that are polynomials of degree $m$ on each mesh interval $[t_{i-1},t_i]$ for $i=1,\ldots,L$. In practical implementations one usually maps the variable $y^m$ into $\R^{Lmn_y}$ by choosing for each mesh interval a set of \emph{representation} nodes $(t_{\mathrm{r},j})_{j=1}^{m+1}$ with $t_{\mathrm{r},1}=0$ and $t_{\mathrm{r},m+1}=1$. The periodic piecewise polynomial $y^m$ is then represented by the values $y^m(t_{\mathrm{r},i,j})$, where $t_{\mathrm{r},i,j}=t_{i-1}+(t_i-t_{i-1})t_{\mathrm{r},j}$ such that $t_{i-1,m+1}=t_{i,1}$ for $i=1,\ldots,L$. Since $y^m$ is periodic, the value at $i=L$, $j=m+1$ stores the additional variable $y^{0,m}$. One can then obtain the value $y^m(t)$ at arbitrary times $t\in\R$ by interpolation and assuming period $1$ of the piecewise polynomial. The representation nodes $t_{\mathrm{r},j}$ can be different from the collocation nodes $t_{\mathrm{c},j}$. That would mean that the unknowns of the discrete problem are given by the values of the relevant functions at the representation nodes, while the equations need to be satisfied at the collocation nodes. Thus, implementing a solver for the discretized fixed-point problem \eqref{thm:disc:fp:eq} involves the linear map $\R^{Lmn_y}\times\R^{n_y}\times\R^{n_\mu}\to\rg J_{10}\mathcal{P}_m$ of the vector $(y^m(t_{\mathrm{r},i,j}),y^{0,m})$ to $y^m(t_{i-1}+(t_i-t_{i-1})t_{\mathrm{c},j}+s)$ for $i=1,\ldots,L$, $j=1,\ldots,m$ and various times $s$ depending on the delays. The condition number of this matrix does not affect the rates of convergence in Theorem~\ref{res:thm:conv}.


\section{Regularity of right-hand sides and solutions of BVP}
\label{sec:assumptions}

The following lemma, a conclusion from \cite{asFEM}, translates mild differentiability from the functional $G$ to the map $g$, defined in \eqref{g_ext}. 
\begin{lemma}[Mild differentiability with time shift]\label{thm:diff:shift}
  Assume that the right-hand side $G:C^0_\pi(\R^{n_y})\times \R^{n_\mu}\to\R^{n_y}$ in \eqref{gen:bvp+bc} satisfies mild differentiability
  to at least order $\lm$. Then, for
  $\ell\leq\lm$ the map $g$ as a map $g:C^{\lm}_\mathrm{e}\to C^{\lm-\ell}_\mathrm{e}$, as defined in \eqref{g_ext}, is
  $\ell$ times continuously differentiable \textup{(}for $\ell=0$
  continuous\textup{)}. Furthermore, the map
    \begin{align*}
      C^{k+\ell}_\mathrm{e}\times(C^{k+\ell-1}_\mathrm{e})^\ell\ni (x,\delta^{x,1},\ldots,\delta^{x,\ell})\mapsto D^\ell g(x)\delta^{x,1}\ldots \delta^{x,\ell}\in C^k_\mathrm{e}
    \end{align*}
    is continuous for all $k\in\{0,\ldots,\lm-\ell\}$.
  \end{lemma}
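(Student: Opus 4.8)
\section*{Proof proposal}

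The plan is to reduce the statement to the single nonlinear component of $g$ that involves the time shift, and then to run an induction on $\ell$ in which the regularity indices are tracked so that the continuous extension granted by mild differentiability always applies. First I would discard the two trivial components of $g$: the $v^0$-slot is the identity and the $\mu$-slot $\mu+R_\mathrm{aff}[v,\mu]$ is bounded and affine, so both are $C^\infty$ with derivative maps that are continuous in every one of the relevant topologies. It then suffices to analyse the superposition map $F:(v,\mu)\mapsto[t\mapsto G(v_t,\mu)]$, i.e.\ $G$ composed with the time shift \eqref{intro:timeshift}.

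Next I would record the two structural facts that carry all the weight. (i) For $v\in C^j_\pi$ the curve $t\mapsto v_t$ is continuous from $[0,1]$ into $C^j_\pi$, and for $j\ge1$ it satisfies $\partial_t v_t=(v')_t$, so that each differentiation in $t$ of a shifted function consumes exactly one spatial derivative. (ii) By polarisation, part~2 of Definition~\ref{def:mild} (stated for the diagonal $D^\ell G(u)(\delta_u)^\ell$) upgrades to a continuous extension of the full symmetric form $(u,\delta^1,\dots,\delta^\ell)\mapsto D^\ell G(u)[\delta^1,\dots,\delta^\ell]$ to $u\in C^\ell$ and $\delta^i\in C^{\ell-1}$, since the polarisation identity expresses this form as a finite combination of diagonal values at the arguments $\sum_{i\in S}\delta^i\in C^{\ell-1}$. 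Facts (i) and (ii) are the only places where the hypotheses on $G$ enter.

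I would then write down the candidate formulas. Differentiating $F$ in its argument merely substitutes shifted directions, $D^\ell F(v,\mu)[(\delta v^1,\delta\mu^1),\dots](t)=D^\ell G(v_t,\mu)[((\delta v^1)_t,\delta\mu^1),\dots]$, with no Fa\`a di Bruno contribution in the operator variable because the shift is linear in $v$. The Fa\`a di Bruno expansion enters only on differentiating this output in $t$: each $t$-derivative either appends a direction $(v'_t,0)$, raising the order of the $G$-derivative, or differentiates one of the existing directions. After $k$ steps one obtains a finite sum of terms $D^{\ell+p}G(v_t,\mu)[(v^{(b_i)}_t,0)_{i\le p},((\delta v^j)^{(c_j)}_t,\ast)_{j\le\ell}]$ with $\sum_i b_i+\sum_j c_j=k$, $b_i\ge1$, $c_j\ge0$, where $\ast$ is $\delta\mu^j$ when $c_j=0$ and $0$ otherwise. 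The decisive bookkeeping is then immediate: for $v\in C^{k+\ell}$ and $\delta v^j\in C^{k+\ell-1}$ one has $p\le k\le\lm-\ell$, hence $\ell+p\le\lm$; moreover $b_i\le k-p+1$ and $c_j\le k-p$ force every direction into $C^{\ell+p-1}$ and the base point into $C^{\ell+p}$. Fact (ii) therefore applies to each term, so each term is a continuous function of $t$ that depends continuously on the data (uniformly in $t\in[0,1]$ by periodicity), and the output lies in $C^k_\mathrm{e}$ with the continuity asserted in the lemma.

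Finally I would promote the candidate formulas to genuine Fr\'echet derivatives by induction on $\ell$, the inductive hypothesis being exactly the statement of the lemma for every admissible $k$. The base case $\ell=0$ is the continuity of $g:C^k_\mathrm{e}\to C^k_\mathrm{e}$, which is the $p\le k$ bookkeeping above together with part~1 of Definition~\ref{def:mild}. For the step I would show that the already-constructed $D^{\ell+1}g$ is continuous in the mild topology (base point in $C^{k+\ell+1}$, directions in $C^{k+\ell}$) and equals the derivative of $x\mapsto D^\ell g(x)$ through the identity $D^\ell g(x+h)-D^\ell g(x)=\int_0^1 D^{\ell+1}g(x+sh)\,h\,\dd s$ (understood with the directions held fixed) and the attendant remainder estimate. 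I expect the main obstacle to be precisely this last point: the loss of one derivative in the output forces the increment $h$ in the mean-value integral to be measured in $C^{k+\ell}$ while the base point $x+sh$ ranges in $C^{k+\ell+1}$, so the estimate closes only because the extended (mild) continuity of fact~(ii) is available at this lower order of smoothness of the directions. The index bookkeeping of the previous paragraph is exactly what guarantees that the orders needed here never exceed the available order $\lm$.
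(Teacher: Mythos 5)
A preliminary remark on the comparison itself: the paper offers no proof of Lemma~\ref{thm:diff:shift} --- it is stated explicitly as ``a conclusion from \cite{asFEM}'', so your proposal can only be judged on its own merits and against that reference. Much of your skeleton is sound and matches what such a proof must do: discarding the two affine components of $g$, reducing to the superposition map $F:(v,\mu)\mapsto[t\mapsto G(v_t,\mu)]$, upgrading the diagonal extension in part~2 of Definition~\ref{def:mild} to the full symmetric multilinear form by polarization, and the index bookkeeping for the Leibniz/Fa\`a di Bruno expansion in $t$ (with $p\le k$, $b_i\le k-p+1$, $c_j\le k-p$, so every term $D^{\ell+p}G$ sees a base point in $C^{\ell+p}$ and directions in $C^{\ell+p-1}$) is correct. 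That bookkeeping, plus compactness of $[0,1]$ in the $t$-variable, does deliver the ``Furthermore'' (joint continuity) part of the lemma.

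There is, however, a genuine gap at exactly the step you flag as the main obstacle, and the idea that closes it is absent from the proposal. The $C^\ell$ claim requires estimates that are \emph{uniform over the unit ball of directions}: in the mean-value remainder $\int_0^1\bigl[D^{\ell+1}g(x+sh)-D^{\ell+1}g(x)\bigr][h,\delta^1,\dots,\delta^\ell]\dd s$ you must show that the supremum over $\|\delta^j\|\le1$ of the integrand is $o(\|h\|)$, i.e.\ operator-norm continuity of $x\mapsto D^{\ell+1}g(x)$. Joint continuity of the mildly extended forms --- which is all that your fact~(ii) provides --- does not imply this: for superposition operators such as the paper's own example $DG(u)y=y(u(0))+u'(u(0))y(0)$, the evaluated map is jointly continuous while $\sup_{\|y\|_0\le1}|y(a)-y(b)|=2$ for $a\neq b$, so operator-norm continuity over a ball of low-regularity directions fails outright. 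What rescues the argument is compactness: the unit ball of $C^{j}$ is precompact in $C^{j-1}$ (Arzel\`a--Ascoli), and the orbit $\{v_t:t\in[0,1]\}$ is compact in $C^{j}$, so joint continuity of the extended form on $C^{j}\times(C^{j-1})^{j}$ upgrades to uniform continuity on these compact sets --- precisely the operator-norm estimate Fr\'echet differentiability demands, and precisely the reason Definition~\ref{def:mild} insists on extending to directions one order \emph{below} the base point. This compact-embedding/equicontinuity argument never appears in your proposal, and without it the induction step does not close. A secondary, fixable omission: the $t$-differentiation of the extended forms and the mean-value identity themselves must first be established for classically smooth data (possible because $\ell+1\le\lm$, via part~1 of Definition~\ref{def:mild}) and then transported to the mild setting by density; you use both as if they were automatic.
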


This section translates the conditions listed in Assumption~\ref{res:ass} on the
infinite-dimensional BVP \eqref{gen:bvp+bc} into assumptions on the fixed point problem \eqref{gen:fixedpoint}, and comments on their consequences for the fixed point operator $\Phi$, as well as Assumption \ref{ass:lebesgue}, concerning the discretized BVP. Assumption~\ref{res:ass} includes \cite[Assumption 2.2]{asFEM}, which is needed independently of the chosen discretization strategy.
\begin{assumption}[Existence of solution, equivalent to Assumption \ref{res:ass}, Point \ref{res:ass:existence}]\label{ass:existence}The fixed-point problem \eqref{Phi1} has a solution
$x^*$\textup{:} $x^*=\Phi(x^*)$.
\end{assumption}
\begin{assumption}[Mild differentiability of right-hand side $G$, equivalent to Assumption~\ref{res:ass}, Point~\ref{res:ass:mdiff}]\label{ass:mdiff}
  The right-hand side $G$ of \eqref{res:bvp} and \eqref{gen:bvp+bc} is mildly
  differentiable to order $\lm\geq 2$.
\end{assumption}
As already observed by And{\`o} and Sieber \cite{asFEM}, thanks to Lemma~\ref{thm:diff:shift} with $\ell=0$, the
regularity (smoothness) of the solution $x^*$ of the fixed-point problem \eqref{Phi1} is a direct consequence of that of $G$.
\begin{corollary}[Smoothness of solution of fixed-point problem \eqref{Phi1}]\label{thm:regularity:fp}
  Let $x^*=(y^*,y^{0,*},\mu^*)$ be a solution of the fixed-point
  problem \eqref{Phi1}, and let the right-hand side $G$ satisfy mild differentiability to order $\lm$. Then
  the solution component $y^*$ is in $C^{\lm+1}_\pi$
  as a function of time, and, hence, $x^*\in C^{\lm+1}_\mathrm{e}$.
\end{corollary}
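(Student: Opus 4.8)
The plan is to run a bootstrapping (regularity-lifting) argument on the fixed-point identity $x^*=\Phi(x^*)$, exploiting the factorisation $\Phi=\mathcal{L}\circ g$ from \eqref{g_ext}--\eqref{integral_ext}. The two factors play complementary roles: by Lemma~\ref{thm:diff:shift} the nonlinear map $g$ preserves the order of smoothness (its continuity statement with $\ell=0$ gives that $g\colon C^k_\mathrm{e}\to C^k_\mathrm{e}$ is continuous for every $k\in\{0,\ldots,\lm\}$), whereas $\mathcal{L}$, being in essence the integration operator $J_{10}$, raises it by one, $\mathcal{L}\colon C^\ell_\mathrm{e}\to C^{\ell+1}_\mathrm{e}$. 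Because $x^*=\mathcal{L}g(x^*)$, any regularity we can read off $g(x^*)$ is immediately upgraded by $\mathcal{L}$ and then reinserted into the identity, so the smoothness of $x^*$ can be increased one order at a time.

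Concretely, I would set up the induction as follows. For the base case we use that $x^*$, being a solution of the fixed-point problem posed on $C^{0,1}_\mathrm{e}$, lies in $C^{0,1}_\mathrm{e}\subset C^0_\mathrm{e}$, so $g(x^*)$ is well defined; in particular $y^*\in C^0_\pi$. For the inductive step, suppose $x^*\in C^k_\mathrm{e}$ with $0\le k\le\lm$. Then Lemma~\ref{thm:diff:shift} yields $g(x^*)\in C^k_\mathrm{e}$, and applying $\mathcal{L}\colon C^k_\mathrm{e}\to C^{k+1}_\mathrm{e}$ to the fixed-point identity gives $x^*=\mathcal{L}g(x^*)\in C^{k+1}_\mathrm{e}$. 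Iterating from $k=0$ up to and including $k=\lm$ produces $x^*\in C^{\lm+1}_\mathrm{e}$, and since the finite-dimensional components $y^{0,*},\mu^*$ are trivially smooth this is equivalent to $y^*\in C^{\lm+1}_\pi$, as asserted. The iteration stops exactly here: once we would need $g(x^*)\in C^{\lm+1}_\mathrm{e}$, Lemma~\ref{thm:diff:shift} no longer applies, because $G$ is only mildly differentiable to order $\lm$, so no further derivative can be extracted.

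The conceptual crux---and the step I would flag as the genuine obstacle were one proving this without Lemma~\ref{thm:diff:shift}---is the claim that $g(x^*)$ keeps the full $C^k$ regularity of $x^*$. The first component of $g(x^*)$ is $t\mapsto G(y^*_t,\mu^*)$, and differentiating it in $t$ requires applying $DG(y^*_t,\mu^*)$ to the shifted derivative $(y^*)'_t$, which for $y^*\in C^k$ is only a $C^{k-1}$ function, i.e.\ a perturbation direction of one order lower regularity than the base point. A naive chain-rule bound would therefore appear to shed a derivative at every stage and stall the bootstrap. This apparent loss is precisely what mild differentiability is designed to absorb: part~2 of Definition~\ref{def:mild} guarantees that the derivatives $D^\ell G$ extend continuously to directions of reduced regularity, and Lemma~\ref{thm:diff:shift} already packages this fact into the clean conclusion that $g$ maps $C^k_\mathrm{e}$ continuously into $C^k_\mathrm{e}$. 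Since I may invoke that lemma, the remaining work is purely bookkeeping: track the regularity offsets of $g$ and $\mathcal{L}$ at each step and make sure the base level sits in the domain $C^{0,1}_\mathrm{e}$ on which $g$ is defined.
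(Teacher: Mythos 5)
Your proposal is correct and follows essentially the same route as the paper, which (citing \citet{asFEM}) obtains the corollary precisely by combining Lemma~\ref{thm:diff:shift} with $\ell=0$ (so that $g$ maps $C^k_\mathrm{e}$ continuously into $C^k_\mathrm{e}$ for $k\le\lm$) with the regularity-raising property $\mathcal{L}\colon C^k_\mathrm{e}\to C^{k+1}_\mathrm{e}$ in a bootstrap on $x^*=\mathcal{L}g(x^*)$. Your identification of where the iteration must stop ($k=\lm$) and of mild differentiability as the mechanism preventing the loss of a derivative matches the paper's reasoning exactly.
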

Corollary~\ref{thm:regularity:fp} allows us to conclude
that the Fr{\'e}chet derivative
\begin{align*}
  C^1_\mathrm{e}\ni x\mapsto D\Phi_{}(x)=\mathcal{L}Dg(x)\in\Lin(C^1_\mathrm{e};C^1_\mathrm{e})
\end{align*}
 of the fixed point map $\Phi$ defined in \eqref{Phi1} exists and is
continuous in $x=x^*$. This follows from the continuous
differentiability of $g$ as a map from $C^1_\mathrm{e}$ to
$C^0_\mathrm{e}$ and the fact that
$\mathcal{L}\in \operatorname{Lin}(C^0_\mathrm{e}; C^1_\mathrm{e})$ increases regularity by one degree.
Since $I-D\Phi(x^*)$ is a Fredholm operator, $\ker(I-D\Phi(x^*))$ is at most finite-dimensional. Assumption~\ref{ass:linear:inv} requires that it is trivial.
\begin{assumption}[Well-posedness of infinite-dimensional linear problem, equivalent to Assumption~\ref{res:ass}, Point~\ref{res:ass:linear:inv}]\label{ass:linear:inv}
 The linear bounded operator $I-D\Phi_{}(x^{\ast})$ is injective on $C^1_\mathrm{e}$, that is, if $v=D\Phi(x^*)v$ and $v\in C^1_\mathrm{e}$ then $v=0$.
\end{assumption}
Note that we may replace $C^1_\mathrm{e}$ in
Assumption~\ref{ass:linear:inv} by $C^0_\mathrm{e}$, since elements $v$ of the nullspace of $[I-D\Phi(x^*)]$ that are in
$C^0_\mathrm{e}$ are also in the image of $\mathcal{L}$ of $C^0_\mathrm{e}$, which
is in $C^1_\mathrm{e}$.
As will be clearer in Section~\ref{sec:convergence}, a primary additional difficulty when proving the convergence for SEM, compared to FEM, is the need to find error bounds that allow us to compensate for the growing Lebesgue constant $\Lambda_m$ as $m\to\infty$. This translates into requiring a further local Lipschitz condition to be satisfied by $DG$ in the state-dependent case. Note that if a functional $G:C^0_\pi\to \R^{n_G}$ is mildly differentiable twice, a mild local Lipschitz
condition can be established for $DG$ near a point $u\in C^2_\pi$. Arguing as Sieber \cite[Lemma A.3]{S12}, for twice mildly differentiable $G$ near $u$ there exist a bound $C$ and a radius $r$ such that
\begin{align}
  \label{mildlip:DG}
  |DG(v)y-DG(w)y|\leq C\|v-w\|_1\|y\|_1\mbox{\ for all $v,w\in \B^{1,1}_r(u)\subset C^{1,1}_\pi$, $y\in C^1_\pi$.}
\end{align}
However, we will need to bound the right-hand side in \eqref{mildlip:DG} also for deviations $v$ and $w$ that are only small in the $C^1$-norm (not the stronger $C^{1,1}$-norm). While in the constant delay case this is a weaker requirement than $G$ being continuously differentiable twice, in the state-dependent case we need a separate explicit assumption.
\begin{definition}[Extended local Lipschitz condition, Definition~\ref{def:mild}~\eqref{def:extlip:eq}, formulated for periodic functions]\label{def:extlip}
  Let $G:C^0_\pi\to\R^{n_G}$ be mildly differentiable once. We say that
  $DG$ satisfies an extended local Lipschitz condition in
  $u\in C^2_\pi$ if there exist a constant $C_u>0$ and a radius $r_u>0$
  such that
\begin{align}
  |DG(u+v)y-DG(u)y|\leq C_u\|v\|_1\|y\|_1\mbox{\quad for all $v\in \B^1_{r_u}(0)\subset C^1_\pi$, $y\in C^1_\pi$.}
\end{align}
\end{definition}
To illustrate that assuming extended local Lipschitz continuity still permits right-hand sides of FDEs with state-dependent delays, let us consider the simple example $G(y)=y(y(0))$ for $n_y=n_G=1$ near some $u\in C^2_\pi$: $DG(u)y=y(u(0))+u'(u(0))y(0)$, such that
\begin{multline*}
|    |DG(u+v)y-DG(u)y|\leq |y(u(0)+v(0))-y(u(0))|+\\
|u'(u(0)+v(0))y(0)-u'(u(0))y(0))|+|v'(u(0)+v(0))y(0)|\\
\leq(\|u\|_{1,1}+2)\|v\|_1\|y\|_{0,1},
\end{multline*}
such that in this case $C_u=\|u\|_{1,1}+2$ and $r_u$ arbitrary. We see that the constant $C_u$ depends on $\|u\|_{1,1}$, such that $u$ must be at least in $C^{1,1}_\pi$, while $v$ needs to be only in $C^1$ and $C^1$-small, and $y$ needs only to be in $C^{0,1}_\pi$.

We assume that the right-hand side $G$ that is part of the nonlinearity $g$ of the fixed point
problem \eqref{gen:fixedpoint} has this extended Lipschitz property in the solution $(y^*_t,y^{0,*},\mu^*)$ for all $t$.
\begin{assumption}[Extended local Lipschitz condition in fixed point, equivalent to Assumption~\ref{res:ass}, Point~\ref{res:ass:ext:llip}]\label{ass:extlip}
  Assume that the nonlinear map $G$ in the first component of the
  nonlinearity $g$ given in \eqref{g_ext} is mildly differentiable
  once and that $DG$ satisfies an extended local Lipschitz condition
  in $(y^*_t,\mu^*)\in C^2_\pi\times\R^{n_\mu}$ for all times $t$ in the fixed point $x^*=(y^*,y^{0,*},\mu^*)$ of $\Phi$.
\end{assumption}
Thanks to the compactness of $\{y^*_t\,:t\in\R\}$ in $C^2_{\pi}$ and the fact that $\B^1_r(0)$ is dense in $\B^{0,1}_r(0)$ when using the $\|\cdot\|_0$-norm, Assumption \ref{ass:extlip} implies that we can extend the Lipschitz bound to Lipschitz continuous deviations: there exist a uniform bound $C_{\mathrm{eL}}>0$ and a uniform radius $r_\mathrm{eL}>0$ such that 
\begin{align}
  \label{def:extlip:ts}
  \|Dg(x^*+v)w-Dg(x^*)w\|_0\leq C_{\mathrm{eL}}\|v\|_1\|w\|_{0,1},
\end{align}
for all $v\in C^1_\mathrm{e}$ with $\|v\|_1\leq r_\mathrm{eL}$ and
$w\in C^{0,1}_\mathrm{e}$ (note that the inequality holds for all $w\in C^{0,1}_\mathrm{e}$, not only in $C^1_\mathrm{e}$).

\section{Accuracy of interpolation for analytic functions} 
\label{sec:analytic}
If $v|_{[t_{i-1},t_i]}$ can be extended to an analytic function in a (complex) neighborhood of $[t_{i-1},t_i]$, then it is analytic in a \emph{Bernstein ellipse}
$$
B_{i}=\left\{\frac{t_{i-1}+t_i}{2}+\left(\frac{t_{i-1}-t_i}{2}\right)(\cosh\eta\cos\theta-\i\sinh\eta\sin\theta)\,:\theta\in[0,2\pi],\eta\in[0,\eta_i]\right\}
$$
for some sufficiently small $\eta_i>0$ defining the shape factor $\e^{\eta_i}$ of the Bernstein ellipse. If this happens for all $1\leq i\leq L$, then $v$ has an analytic extension to $\cup_{i=1}^LB_i$, and the best polynomial approximation of $v$ of degree $m$ on $[t_{i-1},t_i]$ (let us call it $P_{\mathrm{opt},i,m}(v)$) satisfies \cite{tref12}
\begin{align*}
    \limsup_{m\to\infty}\left\|[v-P_{\mathrm{opt},i,m}(v)]\vert_{[t_{i-1},t_i]}\right\|_\infty^{1/m}\leq \e^{-\eta_i}\left\|v\vert_{[t_{i-1},t_i]}\right\|_0.
\end{align*}
Hence, for every $\eta_0<\min\{\eta_1,\ldots,\eta_L\}$ there exists a $m_0$ such that
\begin{align*}
    \left\|[v-P_{\mathrm{opt},i,m}(v)]\vert_{[t_{i-1},t_i]}\right\|_\infty&\leq \e^{-\eta_0 m}\left\|v\vert_{[t_{i-1},t_i]}\right\|_0\mbox{\quad for all $m\geq m_0$ and $i=1,\ldots,L$.}
\end{align*}
By Assumption~\ref{ass:lebesgue} the Lebesgue constant $\Lambda_m$ for the collocation node sequence $(t_{\mathrm{c},j})_{j=1}^m$ satisfies that $\Lambda_m/m\to0$, and we have that \begin{align*}
    \left\|[v-v^m]_{[t_{i-1},t_i]}\right\|_\infty&\leq (1+\Lambda_m)\left\|[v-P_{\mathrm{opt},i,m}(v)]\vert_{[t_{i-1},t_i]}\right\|_\infty\\&\leq(1+\Lambda_m)\e^{-\eta_0 m}\left\|v\vert_{[t_{i-1},t_i]}\right\|_0
\end{align*}
for the interpolation polynomial $v^m$ of degree $m-1$ for $v$ through the nodes $t_{i-1}+(t_i-t_{i-1})t_{\mathrm{c},j}$.
Hence, taking the maximum over all $L$ subintervals $[t_{i-1},t_i]$ and taking into account Assumption~\ref{ass:lebesgue} that $\Lambda_m=o(m)$, there exists for every $\eta\in(0,\eta_0)$ a constant $C_\mathrm{sp}$ such that
\begin{align*}
    \left\|v-P_mv\right\|_\infty\leq C_\mathrm{sp}\e^{-\eta m}\left\|v\right\|_0\mbox{\quad for all $m\geq0$,}
\end{align*}
as claimed in Proposition~\ref{thm:geom:app}.
For specific nodes such as Chebyshev nodes,we have well-known specific estimates,
\begin{align}\label{cheb:analytic}
  \|[I-P_{m}]v\|_\infty<\frac{4M}{\e^\eta-1}\e^{-\eta m}\mbox{,\quad where $M=\max\{|v(z)|:z\in \bigcup_{i=1}^LB_i\}$.}  
\end{align}
Despite the fact that spectral convergence for analytic functions can already be observed with $L=1$, the above construction shows that using a larger $L$ can, in principle, increase the value of $\eta$ and, correspondingly, the convergence rate. We observe this effect in our tests in Section \ref{sec:tests} (see Figure~\ref{fig:MG_conv}). Note that Legendre polynomials, equally adopted in applications, also satisfy Assumption \ref{ass:lebesgue} (see, e.g., \cite[Section 2.13]{boyd01}). Moreover, in both cases the convergence rate of the interpolation is supergeometric when $v$ is entire.
\paragraph{Analyticity of periodic orbits in DDEs}
We conclude this section with some remarks on the possibility to determine a priori whether a periodic solution is analytic. In the constant delay case, it is a consequence of the right-hand side $\gfde$ being analytic \cite{N73}, for example if $\gfde$ has only discrete delays and all coefficients in $\gfde$ are analytic. If the delay is state-dependent, however, this is often not the case. Mallet-Paret and Nussbaum \cite{MPN14} study periodic solutions of time-dependent delay equation of the form
\begin{align}
  \label{mpn:problem}
  v'(t)=f(v(t),v(t-r(t)))
\end{align}
where $f$ and $r$ are analytic. This study is also relevant
for periodic orbits of FDEs with state-dependent delays: if the delay
$r$ has the form $r(t)=r(v(t))$ and \eqref{mpn:problem} has a periodic
orbit $v^*$ of period $T^*$, statements by Mallet-Paret and Nussbaum \cite{MPN14} can be
applied to the problem with time-dependent delay
$r^*(t):=r(v^*(t))$, although it would not be possible to check the assumption on the analyticity of $r^*$ a-priori. Mallet-Paret and Nussbaum \cite{MPN14} observed that the periodic points of the circle map
\begin{align}\label{circle_map}
  t\mapsto t-r(t)\mbox{\quad on the interval $[0,1]$ with periodic extension}
\end{align}
play a role. In unstable periodic points $t_0$ of this circle map the
function $v^*$ will not be analytic, unless additional constraints are
satisfied. Hence, the assumption that $y^*$ is analytic in Theorem~\ref{res:thm:conv} can be checked a-priori for FDEs with constant delays, but can only be indirectly observed by improving convergence properties for increasing $m$ for FDEs with state-dependent delays. M\"uller et al. \cite{muller2018laminar} studied the role of the circle map in the dynamics of systems defined by periodic delays, and observed a previously unknown laminar type of chaos characterized by long plateaus separated by bursts, with plateau levels evolving chaotically. These correspond to attracting periodic points of the inverse circle map.

\section{Convergence of solutions of discretized problem}
\label{sec:convergence}
In this section, we prove that the fixed points of discretized map given in 
\eqref{thm:disc:fp:eq},
\begin{align}
  \Phi_m=\mathcal{L}\mathcal{P}_mg,\label{def:PhiL}
\end{align}
are locally unique, and converge to the fixed point $x^*$ of $\Phi$. By Corollary~\ref{thm:regularity:fp}, the derivative of $\Phi_m$ at $x^*$
$D\Phi_m(x^*)=\mathcal{L}\mathcal{P}_mDg(x^*)$
is well defined. Similarly to the proof of convergence of the FEM in \cite{asFEM}, we reformulate the discretized fixed
point problem $\Phi_m(x)=x$ in terms of
$\delta^x:=x-x^*$ as
\begin{align}\label{dfp2:split}
  \lefteqn{[I-D\Phi_m(x^*)]\delta^x=}\\
  =&\phantom{+}\mathcal{L}(\mathcal{P}_m-I)g(x^*)&&\mbox{(consistency term $\epsilon_\mathrm{c}(m)$)}\label{dfp2:consistency}\\
   &+\mathcal{L}\mathcal{P}_m[g(x^*+\delta^x)-g(x^*)-Dg(x^*)\delta^x]&&\mbox{(nonlinearity term $\epsilon_\mathrm{nl}(m,\delta^x)$).}\label{dfp2:nonlinearity}     
\end{align}
Our first goal is to provide a Lipschitz bound for the error $\epsilon_\mathrm{nl}(m,\delta^x)$ in the nonlinear term. Observe that the arguments to bound the nonlinear term in the FEM case \cite[Lemma 6.5]{asFEM} would not be valid, as there is no uniform upper bound for $\|\mathcal{P}_m\|_{L^{\infty}_{\e}\leftarrow C^0_{\e}}$ if $m$ represents the degree of the interpolating polynomial. This is why Assumption \ref{ass:extlip} on extended local Lipschitz continuity of $DG$ (and, hence, $Dg$) is needed. The following lemma follows from Assumption~\ref{ass:extlip}, specifically \eqref{def:extlip:ts}.
\begin{lemma}[Bounds on deviations of $g$]\label{thm:extdev}
  Let Assumption~\ref{ass:extlip} be satisfied in fixed point $x^*$ of
  $\Phi$. Then there exists a radius $r_\mathrm{eL}>0$ such that
  \begin{multline*}
    \left\|g(x^*+\delta^1)-g(x^*+\delta^2)-Dg(x^*)[\delta^1-\delta^2]\right\|_0
    \leq\\ C_\mathrm{eL,g}\max\left\{\|\delta^1\|_{0,1},\|\delta^2\|_{0,1}\right\}
    \left\|\delta^1-\delta^2\right\|_{0,1}
  \end{multline*}
  for all $\delta^1,\delta^2\in C^{0,1}_\mathrm{e}$ with
  $\|\delta^1\|_{0,1},\|\delta^2\|_{0,1}\leq r_\mathrm{eL}$.
\end{lemma}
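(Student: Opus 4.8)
The plan is to prove the estimate first for the smoother case $\delta^1,\delta^2\in C^1_\mathrm{e}$, where I can write the remainder as an integral of the derivative along a segment, and then to descend to $\delta^1,\delta^2\in C^{0,1}_\mathrm{e}$ by mollification. Before that I would record two facts needed only in the limit, both read off from Lemma~\ref{thm:diff:shift}: taking $\ell=0$, $k=0$ there gives that $g$ is continuous as a map $C^0_\mathrm{e}\to C^0_\mathrm{e}$; taking $\ell=1$, $k=0$ gives that $(x,\delta)\mapsto Dg(x)\delta$ is continuous on $C^1_\mathrm{e}\times C^0_\mathrm{e}$, so that in particular $Dg(x^*)$ extends to a bounded linear operator $C^0_\mathrm{e}\to C^0_\mathrm{e}$.

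For the $C^1$ case I would fix $\delta^1,\delta^2\in C^1_\mathrm{e}$ with $\|\delta^i\|_{0,1}\leq r_\mathrm{eL}$, set $\eta=\delta^1-\delta^2$, and note that the segment $s\mapsto x^*+\delta^2+s\eta$ stays in $C^1_\mathrm{e}$. Since $g$ is continuously differentiable as a map $C^1_\mathrm{e}\to C^0_\mathrm{e}$, the fundamental theorem of calculus gives
\begin{align*}
  g(x^*+\delta^1)-g(x^*+\delta^2)-Dg(x^*)\eta=\int_0^1\bigl[Dg(x^*+\delta^2+s\eta)-Dg(x^*)\bigr]\eta\,\dd s
\end{align*}
as an identity in $C^0_\mathrm{e}$. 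On the segment $v:=\delta^2+s\eta=(1-s)\delta^2+s\delta^1$ is a convex combination, so $\|v\|_1\leq\max\{\|\delta^1\|_1,\|\delta^2\|_1\}$; since $\|\cdot\|_1$ and $\|\cdot\|_{0,1}$ agree on $C^1$ functions, this is at most $\max\{\|\delta^1\|_{0,1},\|\delta^2\|_{0,1}\}\leq r_\mathrm{eL}$. Applying the extended Lipschitz estimate \eqref{def:extlip:ts} to the integrand (with this $v$ and with $w=\eta$) bounds it in $\|\cdot\|_0$, uniformly in $s$, by $C_\mathrm{eL}\max\{\|\delta^1\|_{0,1},\|\delta^2\|_{0,1}\}\|\eta\|_{0,1}$, and integrating over $s\in[0,1]$ yields the claim with $C_\mathrm{eL,g}=C_\mathrm{eL}$.

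To remove the extra regularity I would, given $\delta^1,\delta^2\in C^{0,1}_\mathrm{e}$ with $\|\delta^i\|_{0,1}\leq r_\mathrm{eL}$, mollify the periodic function components with a single fixed family of periodic mollifiers $\rho_n$ (leaving the finite-dimensional components untouched), producing $\delta^{i,n}\in C^1_\mathrm{e}$ with $\delta^{i,n}\to\delta^i$ in $\|\cdot\|_0$. The decisive point is that convolution with a probability kernel increases neither the supremum norm nor the Lipschitz seminorm, so $\|\delta^{i,n}\|_{0,1}\leq\|\delta^i\|_{0,1}\leq r_\mathrm{eL}$ and, because $\delta^{1,n}-\delta^{2,n}=(\delta^1-\delta^2)*\rho_n$, also $\|\delta^{1,n}-\delta^{2,n}\|_{0,1}\leq\|\delta^1-\delta^2\|_{0,1}$. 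Hence the $C^1$ estimate applies to each pair $(\delta^{1,n},\delta^{2,n})$ with a right-hand side bounded, uniformly in $n$, by $C_\mathrm{eL}\max\{\|\delta^1\|_{0,1},\|\delta^2\|_{0,1}\}\|\delta^1-\delta^2\|_{0,1}$. Letting $n\to\infty$ and using the continuity of $g$ and the boundedness of $Dg(x^*)$ on $C^0_\mathrm{e}$ recorded above, the left-hand side converges to the left-hand side of the lemma while the $n$-independent right-hand side is retained, which finishes the proof.

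The hard part is this last regularity bridge. The Lipschitz seminorm contained in $\|\cdot\|_{0,1}$ is only lower semicontinuous under $C^0$-convergence, so a careless $C^1$ approximation could inflate the right-hand side in the limit. Insisting on a \emph{common} mollifier and exploiting the contractivity of convolution in both the supremum norm and the Lipschitz seminorm is what keeps the right-hand side controlled while the left-hand side is allowed to converge in the weaker $C^0$-topology; this is precisely the role of the density of $\B^1_r(0)$ in $\B^{0,1}_r(0)$ noted just before \eqref{def:extlip:ts}.
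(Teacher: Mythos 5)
Your proof is correct and follows essentially the same route as the paper's: the fundamental-theorem-of-calculus representation of the remainder for $C^1_\mathrm{e}$ deviations, the extended Lipschitz estimate \eqref{def:extlip:ts} applied to the convex combination along the segment, and then a descent to $C^{0,1}_\mathrm{e}$ using that $g$ and $Dg(x^*)$ are continuous in $C^0$ arguments. Your explicit mollification construction (with the observation that convolution increases neither the supremum norm nor the Lipschitz seminorm) simply fills in, carefully and correctly, the density step that the paper compresses into its final sentence.
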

\begin{proof}
Let $r_\mathrm{eL}=r_\mathrm{eL,g}/2$ , where $r_\mathrm{eL,g}$ is the
radius of validity for estimate \eqref{def:extlip:ts}. Let
$\delta^1,\delta^2\in C^1_\mathrm{e}$ with
$\|\delta^1\|,\|\delta^2\|\leq r_\mathrm{eL,g}$. For $C^1_\mathrm{e}$
deviations we can write
\begin{align*}
  \lefteqn{\left\|g(x^*+\delta^1)-g(x^*+\delta^2)-Dg(x^*)[\delta^1-\delta^2]\right\|_0}\\
  =&\left\|\int_0^1\left[Dg(x^*+s\delta^1+(1-s)\delta^2)-Dg(x^*)\right]\left[\delta^1-\delta^2\right]\dd s\right\|_0\\
  \leq&C_\mathrm{eL,g}\max\left\{\|\delta^1\|_1,\|\delta^2\|_1\right\}
    \left\|\delta^1-\delta^2\right\|_1,
\end{align*}
where the last line applies the estimate \eqref{def:extlip:ts} to
deviation $v=s\delta^1+(1-s)\delta^2$ and uses that the convex
combination of $s\delta^1+(1-s)\delta^2$ has a norm bounded by
$\max\left\{\|\delta^1\|_1,\|\delta^2\|_1\right\}$. Since $g$ and
$Dg(x^*)$ are continuous in $C^0$ arguments and $C^1_\mathrm{e}$ is
dense in $C^{0,1}_\mathrm{e}$ in the $C^0$-norm, we can extend the
estimate to deviations $\delta^1$ and $\delta^2$ in $C^{0,1}_\mathrm{e}$
and the $\|\cdot\|_{0,1}$-norm.\end{proof}
The following lemma provides us
with an estimate for $\epsilon_\mathrm{nl}$.
\begin{lemma}\label{thm:nonlin:geom}
  Let $x^*=(y^*,y^{0,*},\mu^*)$ be a fixed point of $\Phi$ and $G$ be
  mildly differentiable to order $\lm\geq1$, satisfying the extended
  local Lipschitz condition according to
  Assumption~\ref{ass:extlip}. Let the discretization satisfy
  Assumption~\ref{ass:lebesgue} for the mesh with $L$ intervals and
  increasing degrees $m$. Then there exists a constant
  $C_\mathrm{nl}>0$ and a radius $r_{\max}$ such that for all $r\leq r_{\max}$
  and $m>0$
  \begin{align*}
    \|\epsilon_\mathrm{nl}(m,\delta^{x,1})-\epsilon_\mathrm{nl}(m,\delta^{x,2})\|_{0,1}\leq C_\mathrm{nl}\Lambda_m r\|\delta^{x,1}-\delta^{x,2}\|_{0,1}
  \end{align*}
  for all $\delta^{x,1},\delta^{x,2}\in \B^{0,1}_r(0)$.
  In particular, \textup{(}setting $\delta^x=\delta^{x,1}$ and $\delta^{x,2}=0$\textup{):}
\begin{align*}
  \|\epsilon_\mathrm{nl}(m,\delta^x)\|_{0,1}\leq C_\mathrm{nl}\Lambda_m r\|\delta^x\|_{0,1}\mbox{\quad for all $\delta^x\in \mathcal{B}^{0,1}_{r}(0)$.}
\end{align*}
\end{lemma}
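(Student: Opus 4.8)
The plan is to factor the difference $\epsilon_\mathrm{nl}(m,\delta^{x,1})-\epsilon_\mathrm{nl}(m,\delta^{x,2})$ through the linear maps $\mathcal{L}$ and $\mathcal{P}_m$, and to reduce the whole nonlinear estimate to the $C^0$-bound on the second-order remainder of $g$ supplied by Lemma~\ref{thm:extdev}. Writing out the definition \eqref{dfp2:nonlinearity} of $\epsilon_\mathrm{nl}$, linearity of $Dg(x^*)$ together with cancellation of the $g(x^*)$ terms gives
\[
  \epsilon_\mathrm{nl}(m,\delta^{x,1})-\epsilon_\mathrm{nl}(m,\delta^{x,2})
  =\mathcal{L}\mathcal{P}_m\,\Delta g,\qquad
  \Delta g:=g(x^*+\delta^{x,1})-g(x^*+\delta^{x,2})-Dg(x^*)(\delta^{x,1}-\delta^{x,2}).
\]
Here $\Delta g\in C^0_\mathrm{e}$, since $g$ maps $C^{0,1}_\mathrm{e}$ into $C^0_\mathrm{e}$ and $Dg(x^*)$ extends to $C^{0,1}_\mathrm{e}$ by \eqref{def:extlip:ts}, so that $\mathcal{P}_m$ may be applied to it.

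First I would bound $\|\Delta g\|_0$. Choosing $r_{\max}=r_\mathrm{eL}$ (the radius from Lemma~\ref{thm:extdev}) and using $\delta^{x,1},\delta^{x,2}\in\B^{0,1}_r(0)$ with $r\le r_{\max}$, so that $\max\{\|\delta^{x,1}\|_{0,1},\|\delta^{x,2}\|_{0,1}\}\le r$, Lemma~\ref{thm:extdev} yields
\[
  \|\Delta g\|_0\le C_\mathrm{eL,g}\,r\,\|\delta^{x,1}-\delta^{x,2}\|_{0,1}.
\]
Next I would bound the projection: since $\mathcal{P}_m$ acts as the identity on the finite-dimensional components, its norm as a map $C^0_\mathrm{e}\to L^\infty_\mathrm{e}$ is controlled by $P_m$ on the function part, and by the definition of the Lebesgue constant $\|P_mw\|_\infty\le\Lambda_m\|w\|_0$ (interval by interval, $\Lambda_m$ being affine-invariant), whence $\|\mathcal{P}_m\Delta g\|_\infty\le\Lambda_m\|\Delta g\|_0$. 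Finally, $\mathcal{L}$ is bounded from $L^\infty_\mathrm{e}$ to $C^{0,1}_\mathrm{e}$. Chaining the three estimates gives
\[
  \|\epsilon_\mathrm{nl}(m,\delta^{x,1})-\epsilon_\mathrm{nl}(m,\delta^{x,2})\|_{0,1}
  \le\|\mathcal{L}\|\,\Lambda_m\,C_\mathrm{eL,g}\,r\,\|\delta^{x,1}-\delta^{x,2}\|_{0,1},
\]
and setting $C_\mathrm{nl}=\|\mathcal{L}\|\,C_\mathrm{eL,g}$ proves the claim; the ``in particular'' statement follows by taking $\delta^{x,2}=0$, since $\epsilon_\mathrm{nl}(m,0)=0$.

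The step I expect to be the crux is the projection bound, precisely because we are forced to accept a factor growing like $\Lambda_m$. In the FEM analysis of \cite{asFEM} the projection norm is uniformly bounded, whereas here $\Lambda_m\to\infty$, so the only way to keep the Lipschitz constant serviceable is to extract the compensating smallness factor $r$ from the nonlinear remainder. This is exactly where Assumption~\ref{ass:extlip}, through Lemma~\ref{thm:extdev} and estimate \eqref{def:extlip:ts}, is indispensable: it controls $\Delta g$ in the \emph{weak} $C^0$-norm by the product of the $C^{0,1}$-sizes of the two deviations, rather than in the stronger $C^{1,1}$-norm that mild differentiability alone would yield. Measuring $\Delta g$ in $\|\cdot\|_0$ is what makes it compatible with the interpolation bound $\|P_m\|\le\Lambda_m$. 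The resulting constant $C_\mathrm{nl}\Lambda_m r$ is still useful because, in the subsequent contraction argument, $r$ will be tied to the consistency error $\epsilon_\mathrm{c}(m)$, whose decay outpaces the growth of $\Lambda_m$ under Assumption~\ref{ass:lebesgue}.
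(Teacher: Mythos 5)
Your proposal is correct and takes essentially the same route as the paper's own proof: both factor the difference through $\mathcal{L}\mathcal{P}_m$ applied to the remainder $g(x^*+\delta^{x,1})-g(x^*+\delta^{x,2})-Dg(x^*)[\delta^{x,1}-\delta^{x,2}]$, bound that remainder in the $C^0$-norm via Lemma~\ref{thm:extdev} with $r_{\max}$ taken as the radius of validity there, estimate $\|\mathcal{P}_m\|_{L^\infty_\mathrm{e}\leftarrow C^0_\mathrm{e}}$ by $\Lambda_m$, and collect $C_\mathrm{nl}=\|\mathcal{L}\|_{C^{0,1}_\mathrm{e}\leftarrow L^\infty_\mathrm{e}}C_\mathrm{eL,g}$. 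Your closing remarks on why the $C^0$-measurement of the remainder is the crucial compatibility with the $\Lambda_m$ projection bound (and how $r$ later compensates for $\Lambda_m$ in the contraction argument) accurately reflect the paper's own discussion following the lemma.
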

This lemma implies that we can make the Lipschitz constant of
$\delta^x\to\epsilon_\mathrm{nl}(m,\delta^x)$ small if we make the
admissible ball for $\delta^x$ small in the $C^{0,1}$-norm, consistent
with the idea of $\epsilon_\mathrm{nl}$ being the higher-order
term. However, in contrast to the finite-element case, the Lipschitz
constant grows with $m$, such that we may have to make the radius $r$
smaller as we increase $m$.
\begin{proof}
By
Lemma~\ref{thm:extdev} there exists a radius $r$ such that
\begin{multline*}
  \left\|g(x^*+\delta^1)-g(x^*+\delta^2)-Dg(x^*)[\delta^1-\delta^2]\right\|_0
  \leq\\ C_\mathrm{eL,g}\max\left\{\|\delta^1\|_{0,1},\|\delta^2\|_{0,1}\right\}
  \left\|\delta^1-\delta^2\right\|_{0,1}
\end{multline*}
for all $\delta^1,\delta^2\in \B^{0,1}_{r_{\max}}(0)$. By definition of $\epsilon_\mathrm{nl}$,
\begin{align*}
  \lefteqn{\|\epsilon_\mathrm{nl}(m,\delta^{x,1})-\epsilon_\mathrm{nl}(m,\delta^{x,2})\|_{0,1}}\\
  &\leq \|\mathcal{L}\|_{C^{0,1}_\mathrm{e}\leftarrow L^\infty_\mathrm{e}}\|\mathcal{P}_{m}\|_{ L^\infty_\mathrm{e}\leftarrow C^0_\mathrm{e}}
    \left\|g(x^*+\delta^1)-g(x^*+\delta^2)-Dg(x^*)[\delta^1-\delta^2]\right\|_0\\
  &\leq \|\mathcal{L}\|_{C^{0,1}_\mathrm{e}\leftarrow L^\infty_\mathrm{e}}\Lambda_m C_\mathrm{eL,g}\max\left\{\|\delta^1\|_{0,1},\|\delta^2\|_{0,1}\right\}
    \left\|\delta^1-\delta^2\right\|_{0,1}
\end{align*}
Hence, collecting
\begin{align*}
  C_\mathrm{nl}=\|\mathcal{L}\|_{C^{0,1}_\mathrm{e}\leftarrow L^\infty_\mathrm{e}}C_\mathrm{eL,g}
\end{align*}
and bounding $\|\delta^1\|_{0,1},\|\delta^2\|_{0,1}$ by the $C^{0,1}_\mathrm{e}$ radius $r$ results in the claim of the Lemma. 
\end{proof}
A necessary condition
for the well-posedness of the discretized problem is its stability, i.e., the invertibility of the operator
$I-D\Phi_m(x^*)$ uniformly for $m\to\infty$. Thanks to the Banach perturbation lemma, stability follows from the consistency of the derivative, which in turn requires a degree of mild differentiability of at least two (Assumption~\ref{ass:mdiff}).
\begin{lemma}[Consistency of derivative of $\Phi_m$]
\label{thm:stability}
  If $G$ is mildly differentiable twice, then $Dg(x^*)\in\Lin(C^1_\mathrm{e};C^1_\mathrm{e})$, such that $D\Phi_m(x^*)$ satisfies
  \begin{multline}
    \label{rem:DPhi:consistency:sharp}
  \|D\Phi_m(x^*)-D\Phi(x^*)\|_{C^{0,1}_\mathrm{e}\leftarrow C^{0,1}_\mathrm{e}}\leq\\ \frac{3(1+\Lambda_m)}{m}{\|\mathcal{L}\|_{C^{0,1}_\mathrm{e}\leftarrow L^\infty_\mathrm{e}}\|Dg(x^*)\|_{C^1_\mathrm{e}\leftarrow C^1_\mathrm{e}}}\cdot \max_{i\leq L}[t_i-t_{i-1}].
  \end{multline}
\end{lemma}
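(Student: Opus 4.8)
The plan is to exploit the shared factorizations $\Phi_m=\mathcal{L}\mathcal{P}_m g$ and $\Phi=\mathcal{L}g$, so that the two linearizations differ only through the interpolation projection:
\begin{equation*}
D\Phi_m(x^*)-D\Phi(x^*)=\mathcal{L}\,(\mathcal{P}_m-I)\,Dg(x^*).
\end{equation*}
First I would verify the auxiliary claim $Dg(x^*)\in\Lin(C^1_\mathrm{e};C^1_\mathrm{e})$: this is Lemma~\ref{thm:diff:shift} with $\ell=k=1$, which requires $\lm\geq2$ and evaluates the continuous map $C^2_\mathrm{e}\times C^1_\mathrm{e}\to C^1_\mathrm{e}$ at the fixed argument $x^*\in C^{\lm+1}_\mathrm{e}\subset C^2_\mathrm{e}$ (Corollary~\ref{thm:regularity:fp}). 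Since $\mathcal{L}$ maps $L^\infty_\mathrm{e}$ boundedly into $C^{0,1}_\mathrm{e}$, and since $\mathcal{P}_m$ acts as the identity on the finite-dimensional components (cf.~\eqref{def:interp:PL}), for each test vector $w$ the deviation $(\mathcal{P}_m-I)Dg(x^*)w$ has only its $1$-periodic function component nonzero, equal to $(P_m-I)h$ with $h:=[Dg(x^*)w]_1$, giving
\begin{equation*}
\|[D\Phi_m(x^*)-D\Phi(x^*)]w\|_{0,1}\leq\|\mathcal{L}\|_{C^{0,1}_\mathrm{e}\leftarrow L^\infty_\mathrm{e}}\,\|(P_m-I)h\|_\infty.
\end{equation*}

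Next I would estimate $\|(P_m-I)h\|_\infty$ for test vectors $w\in C^1_\mathrm{e}$, for which $h\in C^1_\pi$ with $\|h'\|_\infty\leq\|Dg(x^*)\|_{C^1_\mathrm{e}\leftarrow C^1_\mathrm{e}}\|w\|_1$. On each mesh interval $[t_{i-1},t_i]$ the Lebesgue inequality gives $\|(P_m-I)h\|_{\infty,[t_{i-1},t_i]}\leq(1+\Lambda_m)\,E_{m-1}\big(h|_{[t_{i-1},t_i]}\big)$, where $E_{m-1}$ is the best degree-$(m-1)$ approximation error. Rescaling the subinterval to $[-1,1]$ (which multiplies the derivative bound by $(t_i-t_{i-1})/2$) and invoking Jackson's theorem for $C^1$ functions gives $E_{m-1}(h|_{[t_{i-1},t_i]})\leq C_J\,(t_i-t_{i-1})\,\|h'\|_\infty/m$ with an absolute constant $C_J\leq3$. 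Taking the maximum over the $L$ intervals and using $\|w\|_1=\|w\|_{0,1}$ for $C^1$ arguments then yields
\begin{equation*}
\|(P_m-I)h\|_\infty\leq\frac{3(1+\Lambda_m)}{m}\Big(\max_{i\leq L}\{t_i-t_{i-1}\}\Big)\|Dg(x^*)\|_{C^1_\mathrm{e}\leftarrow C^1_\mathrm{e}}\,\|w\|_{0,1},
\end{equation*}
which is exactly \eqref{rem:DPhi:consistency:sharp} restricted to $C^1_\mathrm{e}$ test vectors.

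The step I expect to be the main obstacle is upgrading this from $C^1_\mathrm{e}$ to the full operator norm over $C^{0,1}_\mathrm{e}$: the decisive factor $1/m$ rests on $h$ being Lipschitz-differentiable, whereas a generic $w\in C^{0,1}_\mathrm{e}$ need not lie in $C^1_\mathrm{e}$. I would close this gap by density and mollification. Mollifying $w$ on the circle produces $w_n\in C^1_\mathrm{e}$ with $w_n\to w$ in $\|\cdot\|_0$ and $\|w_n\|_{0,1}\leq\|w\|_{0,1}$ (convolution with a probability kernel raises neither the supremum nor the Lipschitz constant, and the constant finite-dimensional components are untouched). The whole operator $\mathcal{L}(\mathcal{P}_m-I)Dg(x^*)$ is $\|\cdot\|_0$-continuous --- $Dg(x^*)$ is continuous in $C^0$ arguments (as used in the proof of Lemma~\ref{thm:extdev}), $\mathcal{P}_m$ is $\|\cdot\|_0$-bounded by the Lebesgue constant, and $\mathcal{L}$ is bounded --- so the $C^1$ estimate passes to the limit, using that $\|\cdot\|_{0,1}$ is lower semicontinuous under $\|\cdot\|_0$-convergence. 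This extends the displayed bound to all $w\in C^{0,1}_\mathrm{e}$ with the same constant and establishes \eqref{rem:DPhi:consistency:sharp}.
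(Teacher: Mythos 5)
Your proposal is correct and follows essentially the same route as the paper's own proof: the same factorization $D\Phi_m(x^*)-D\Phi(x^*)=\mathcal{L}(\mathcal{P}_m-I)Dg(x^*)$, the boundedness $Dg(x^*)\in\Lin(C^1_\mathrm{e};C^1_\mathrm{e})$ (which the paper checks directly via the product rule $[Dg(x^*)v]'=D^2g(x^*)[(x^*)']v+Dg(x^*)v'$ rather than by citing Lemma~\ref{thm:diff:shift}), a Lebesgue-constant/Jackson-type interpolation bound on each mesh interval, and the density of $C^1_\mathrm{e}$ in $C^{0,1}_\mathrm{e}$ with respect to the $\|\cdot\|_0$-norm combined with $C^0$-continuity of $Dg(x^*)$. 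The only difference is cosmetic ordering: the paper first transfers the Lipschitz bound $\lip[Dg(x^*)v]\leq\|Dg(x^*)\|_{C^1_\mathrm{e}\leftarrow C^1_\mathrm{e}}\|v\|_{0,1}$ to all $v\in C^{0,1}_\mathrm{e}$ by density and then applies the interpolation estimate for Lipschitz functions, whereas you prove the full estimate for $C^1_\mathrm{e}$ test vectors and pass to the limit at the operator level at the end.
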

The quantity $\max_{i\leq L}[t_i-t_{i-1}]$ is the maximum of the interval lengths in the fixed grid defining $P_m$ in \eqref{def:interp:proj}.
\begin{proof}
  By mild differentiability of $g$ to order $\ell_{\max}\geq 2$ the map $Dg(x^*)$ is in
  $\Lin(C^1_\mathrm{e};C^1_\mathrm{e})$: $Dg(x^*)v$ exists and is in $C^0_\mathrm{e}$ if $v\in C^0_\mathrm{e}$ and $g$ is mildly differentiable (once). For the time derivative of $Dg(x^*)v$ we get $[Dg(x^*)v]'=D^2g(x^*)[(x^*)']v+Dg(x^*)v'$ if $v\in C^1_\mathrm{e}$, and $g$ is mildly differentiable twice. Since $v\mapsto Dg(x^*)v$ is continuous for $v\in C^0_\mathrm{e}$, we can use the norm $\|Dg(x^*)\|_{C^1_\mathrm{e}\leftarrow C^1_\mathrm{e}}$ also to estimate the Lipschitz constants of $Dg(x^*)v$ for $v\in C^{0,1}_\mathrm{e}$:
  \begin{align*}
      \lip [Dg(x^*)v]\leq \|Dg(x^*)\|_{C^1_\mathrm{e}\leftarrow C^1_\mathrm{e}}\|\|v\|_{0,1}.
  \end{align*}
  Furthermore, the interpolation approximation $P_m$ satisfies for any Lipschitz continuous function $y$ for $h=\max_{i\leq L}[t_i-t_{i-1}]$ (see \cite{riv69})
  \begin{align}
    \|y-P_my\|_\infty \leq 6(1+\Lambda_m)\lip y\frac{h}{2m}.\label{PL:genconv:mod}
  \end{align}
  Thus, 
  \begin{align}
    \|[I-\mathcal{P}_m]Dg(x^*)v\|_\infty \leq 6(1+\Lambda_m)\|Dg(x^*)\|_{C^1_\mathrm{e}\leftarrow C^1_\mathrm{e}}\cdot\frac{h}{2m}
    \mbox{\quad if $\|v\|_{0,1}\leq 1$,}\label{PL:modcont}
  \end{align}
which implies the claim of the lemma after application of operator $\mathcal{L}$.
\end{proof}
The stability of the discretized problem follows from Lemma~\ref{thm:stability}, thanks to Assumptions~\ref{ass:lebesgue} and \ref{ass:linear:inv}. We will construct the linear stability constant $C_\mathrm{stab}$ in the proof of Theorem~\ref{thm:conv} below. 

\paragraph{Obstacles for SEM convergence proof in arguments by And{\`o} and Breda \cite{andoSIAM2020}} The approach taken by And{\`o} and Breda \cite{andoSIAM2020} for FEM failed to establish an equivalent of Lemma~\ref{thm:stability}, thus, not providing a route for proving convergence for SEM. Due to the choice by And{\`o} and Breda \cite{andoSIAM2020} of defining the problem on non-periodic spaces of functions, it was necessary to formulate the periodicity condition as the equality between the initial and final \emph{states} (not the \emph{values} at single time points), since otherwise the linearized problem would have not been well-posed (see the comment at the end of \cite[Section 2]{andoSIAM2020}). This meant working with the infinite-dimensional state space, a space of differentiable functions. Given its norm, which involves the derivative of the functions, the consistency of the derivatives of $\Phi_L$ did not hold, which complicated the proof of stability significantly, as it was required to prove the boundedness of $\|I-D\Phi_L(x^*)\|^{-1}$ directly. The expression of the bounds obtained by And{\`o} and Breda \cite{andoSIAM2020} included $\Lambda_m+\Lambda_m'$ as a factor, where
\begin{equation*}
\Lambda'_{m}:=\max_{t\in[t_{i-1},t_{i}]}\sum_{j=0}^m|l'_{m,j}(t)|
\end{equation*}
for any $i\in\{1,\ldots,L\}$ (see \cite[Section 4.4]{andoSIAM2020} and \cite[equation (A.19) in Proposition A.8]{andoSIAM2020}). The factor $\Lambda_m+\Lambda_m'$ grows unbounded as $m\to\infty$.

\bigskip\noindent
In order to complete the convergence proof we also need a bound for the consistency error $\epsilon_\mathrm{c}(m)$.
\begin{lemma}\label{thm:cons:geom}
  Let $x^*=(y^*,y^{0,*},\mu^*)$ be a fixed point of $\Phi$ and $G$  be mildly differentiable to order at least $\lm\geq 2$. Let
  $(P_{m})_{m=1}^\infty$ be a sequence of projections with slowly diverging Lebesgue constant, satisfying
  Assumption~\ref{ass:lebesgue}. Then there exists a constant
  $C_\mathrm{cs}>0$ such that the consistency error
  $\epsilon_\mathrm{c}(m)$ in  \eqref{dfp2:consistency} satisfies
  \begin{align*}
\|\epsilon_\mathrm{c}(m)\|_{0,1}\leq C_{\mathrm{cs}}\left(\frac{\Lambda_m+1}{m^{\ell_{\max}}}\right)\mbox{\quad for all $m\geq\lm-1$.}
\end{align*}
If $y^*$ has an analytic extension to a complex neighborhood of $[0,1]$, there exist constants
  $C_\mathrm{acs},\eta>0$
\begin{align*}
\|\epsilon_\mathrm{c}(m)\|_{0,1}\leq C_\mathrm{acs}\e^{-\eta m}\mbox{\quad for all $m\geq1$.}
\end{align*}
\end{lemma}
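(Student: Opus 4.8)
The plan is to peel off the bounded linear operator $\mathcal{L}$ and reduce the whole estimate to a pure interpolation error for a single periodic function. Since $\mathcal{L}\in\Lin(L^\infty_\mathrm{e};C^{0,1}_\mathrm{e})$, the definition of $\epsilon_\mathrm{c}(m)$ in \eqref{dfp2:consistency} gives immediately
\begin{align*}
  \|\epsilon_\mathrm{c}(m)\|_{0,1}
  \leq \|\mathcal{L}\|_{C^{0,1}_\mathrm{e}\leftarrow L^\infty_\mathrm{e}}\,\|(\mathcal{P}_m-I)g(x^*)\|_\infty .
\end{align*}
Because $\mathcal{P}_m$, defined in \eqref{def:interp:PL}, acts as $P_m$ on the function component and as the identity on the two finite-dimensional components, the element $(\mathcal{P}_m-I)g(x^*)$ has only its first component nonzero, and that component is $(P_m-I)w$ with $w:=G(y^*_\cdot,\mu^*)\in C^0_\pi$. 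Hence everything reduces to bounding $\|(I-P_m)w\|_\infty$, the piecewise-polynomial interpolation error of the single function $w$ on the fixed mesh.

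Next I would pin down the regularity of $w$. By Corollary~\ref{thm:regularity:fp} the solution component $y^*$ lies in $C^{\lm+1}_\pi$, and differentiating the fixed-point relation $x^*=\Phi(x^*)$ in its first component yields $(y^*)'(t)=G(y^*_t,\mu^*)-\int_0^1 G(y^*_s,\mu^*)\dd s$, so that $w$ differs from $(y^*)'$ only by an additive constant and therefore belongs to $C^{\lm}_\pi$. (Constants are reproduced by $P_m$ for $m\geq1$, so this additive constant is irrelevant in $(I-P_m)w$.) In the analytic case, if $y^*$ extends analytically to a complex neighborhood of $[0,1]$, then so does $(y^*)'$, hence $w$ is analytic and $1$-periodic on that neighborhood.

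For the finite-regularity bound I would combine a Jackson-type best-approximation estimate with the Lebesgue constant on each of the $L$ (fixed) mesh intervals. On $[t_{i-1},t_i]$ the restriction $w|_{[t_{i-1},t_i]}\in C^{\lm}$, so its best degree-$(m-1)$ polynomial approximation error is $O(\|w^{(\lm)}\|_\infty\,m^{-\lm})$ (the fixed interval length being absorbed into the constant), while the interpolation error exceeds this by at most the factor $1+\Lambda_m$. Taking the maximum over $i=1,\ldots,L$ gives
\begin{align*}
  \|(I-P_m)w\|_\infty\leq C\,(1+\Lambda_m)\,\frac{\|w^{(\lm)}\|_\infty}{m^{\lm}},
\end{align*}
which, after multiplying by $\|\mathcal{L}\|$, is the first claim. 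The Jackson estimate requires the degree $m-1$ to reach $\lm$; the finitely many remaining small values $m\geq\lm-1$ contribute only a bounded $\|\epsilon_\mathrm{c}(m)\|_{0,1}$ and are covered by enlarging $C_\mathrm{cs}$. For the analytic case the work is essentially done by Corollary~\ref{thm:pmconv}: applying \eqref{thm:exp:pm} to the analytic, periodic $w$ yields $\|(I-P_m)w\|_\infty\leq C_\mathrm{sp}\e^{-\eta m}$, and multiplying by $\|\mathcal{L}\|$ produces the constant $C_\mathrm{acs}$.

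The main obstacle is the finite-regularity step: one must control the piecewise interpolation error with the \emph{correct} power $m^{-\lm}$ while carrying the unavoidable $\Lambda_m$ factor, which is precisely the quantitative Jackson-theorem-plus-Lebesgue-constant combination on a fixed mesh. By contrast, the analytic estimate is immediate once the analyticity and periodicity of $w$ are recognized, since Proposition~\ref{thm:geom:app} and Corollary~\ref{thm:pmconv} already encapsulate the exponential accuracy of the interpolation.
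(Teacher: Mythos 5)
Your proposal is correct and follows essentially the same route as the paper's proof: reduce $\epsilon_\mathrm{c}(m)$ via $\|\mathcal{L}\|_{C^{0,1}_\mathrm{e}\leftarrow L^\infty_\mathrm{e}}$ to the interpolation error of the single component $G(y^*_\cdot,\mu^*)=(y^*)'\in C^{\lm}_\pi$, bound that by Jackson's theorem times $(1+\Lambda_m)$ on each fixed mesh interval, and invoke the exponential interpolation accuracy (Proposition~\ref{thm:geom:app}/Corollary~\ref{thm:pmconv}) in the analytic case. Your explicit identification of $w$ with $(y^*)'$ up to a constant and your handling of the finitely many small degrees $m$ are slightly more detailed than, but fully consistent with, the paper's argument.
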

\begin{proof}
For the combined solution $x^*=(y^*,y^{0,*},\mu^*)$ we have that $G(y^*_t,\mu^*)=(y^*)'$, and $G$ is the only nonlinear component of $g(x^*)$. Hence, if $x^*\in C^{\ell+1}_\mathrm{e}$, then $g(x^*)\in C^{\ell}_\mathrm{e}$, and if $y^*$ is analytic, so is $G(y^*,\mu^*)=(y^*)'$ with the same domain.

By Corollary~\ref{thm:regularity:fp}, $x^*$ is in $C^{\lm+1}_\mathrm{e}$ and, hence, $g(x^*)$ is in $C^{\lm}_\mathrm{e}$. Let $i\in\{0,\ldots,L-1\}$. Then, for $m\geq\lm-1$, $h=\max_{i\leq L}[t_i-t_{i-1}]$,
\begin{equation}\label{epsLu2}
\|(\mathcal{P}_m-I)g(x^*)\vert_{[t_i,t_{i+1}]}\|_{\infty}\leq \cdot \|{g(x^{\ast})}^{(\lm)}\|_0(1+\Lambda_{m})c_{\lm}\left(\frac{h}{2m}\right)^{\lm}.
\end{equation}
where $c_{\lm}$ is a positive constant independent of $m$ and $g(x^*)$. \eqref{epsLu2} is a direct consequence of Jackson’s theorem on best uniform
approximation, see, e.g., \cite[(2.9) and (2.11)]{mas15NM}.

If $y^*$ is analytic, by
Assumption~\ref{ass:lebesgue} there exist constants $C_{\mathrm{sp}}$ and $\eta$
such that
\begin{align*}
    \|[I-P_{L,m}](y^*)'\|_\infty<C_{\mathrm{sp}}\e^{-\eta m}.
\end{align*}
So, overall, choosing 
$$
C_\mathrm{cs}=\left(\frac{h}{2}\right)^{\lm}c_{\lm}\|{g(x^{\ast})}^{(\lm)}\|_0\|\mathcal{L}\|_{C^{0,1}_\mathrm{e}\leftarrow L^\infty_\mathrm{e}},\quad C_\mathrm{acs}=C_{\mathrm{sp}}\|\mathcal{L}\|_{C^{0,1}_\mathrm{e}\leftarrow L^\infty_\mathrm{e}}
$$
we obtain the claim of the lemma.
\end{proof}
\paragraph{Convergence result}
We can now collect Lemma~\ref{thm:cons:geom} about consistency,
the stability result in Lemma \ref{thm:stability} and the estimate for the
nonlinear part in Lemma~\ref{thm:nonlin:geom} to determine bounds to the convergence rate according to the regularity of $y^*$. The result is the following Theorem~\ref{thm:conv}, which is equivalent to the main result of the paper, Theorem~\ref{res:thm:conv}.
\begin{theorem}[Rate of convergence of collocation]\label{thm:conv}
   Let $G$ be mildly differentiable to order $\lm\geq2$, let
  $x^*=(y^*,y^{0,*},\mu^*)$ be a fixed point of $\Phi$ with bounded inverse of
  $I-D\Phi(x^*)$, and let $G$ be locally Lipschitz continuous in the
  extended sense of Assumption~\ref{ass:extlip}. Let
  the interpolation projection $P_{m}$ have a slowly diverging Lebesgue constant according to Assumption~\ref{ass:lebesgue}.
  For every $\varepsilon>0$ there exists a minimal degree $m_\mathrm{low}$ such that for
  all $m\geq m_\mathrm{low}$ the discretized fixed point problem
  $x=\Phi_{m}(x)$ has a unique solution $x^{m}$ in a ball
  $\B^{0,1}_{r_m}(x^*)$, where the radius $r_m$ depends on $m$. The error
  $\delta^{x,m}=x^{m}-x^*$ satisfies
  \begin{align}
    \label{thm:geom:err}
    \|\delta^{x,m}\|_{0,1}\leq (1+\varepsilon)\|[I-D\Phi(x^*)]^{-1}\|_{0,1}\|\epsilon_\mathrm{c}(m)\|_{0,1}
    \mbox{\quad for all $m\geq m_\mathrm{low}$,}
  \end{align}
where $\|\epsilon_\mathrm{c}(m)\|_{0,1}$ decays according to the regularity of $x^*$, following Lemma~\ref{thm:cons:geom}.
\end{theorem}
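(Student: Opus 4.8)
The plan is to recast the discretized fixed-point equation $x=\Phi_m(x)$, written in the deviation variable $\delta^x=x-x^*$ through the splitting \eqref{dfp2:split} into consistency term \eqref{dfp2:consistency} and nonlinearity term \eqref{dfp2:nonlinearity}, as a fixed-point equation for a contraction and then invoke the Banach fixed point theorem. First I would establish uniform stability of the linearization. By Lemma~\ref{thm:stability} together with Assumption~\ref{ass:lebesgue} (which gives $\Lambda_m/m\to0$), the operator norm $\|D\Phi_m(x^*)-D\Phi(x^*)\|_{C^{0,1}_\mathrm{e}\leftarrow C^{0,1}_\mathrm{e}}$ tends to $0$ as $m\to\infty$. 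Since $I-D\Phi(x^*)$ is boundedly invertible by hypothesis, the Banach perturbation lemma yields invertibility of $I-D\Phi_m(x^*)$ for all sufficiently large $m$, with
\[
  C_\mathrm{stab}(m):=\|[I-D\Phi_m(x^*)]^{-1}\|_{0,1}\to K:=\|[I-D\Phi(x^*)]^{-1}\|_{0,1}.
\]

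Next I would introduce the map $N_m(\delta^x):=[I-D\Phi_m(x^*)]^{-1}\bigl[\epsilon_\mathrm{c}(m)+\epsilon_\mathrm{nl}(m,\delta^x)\bigr]$, whose fixed points correspond exactly to solutions $x=x^*+\delta^x$ of $x=\Phi_m(x)$. Fixing $\theta\in(0,1)$ with $1/(1-\theta)<1+\varepsilon$, I would look for a radius $r_m$ rendering $N_m$ a contractive self-map of $\B^{0,1}_{r_m}(0)$. Since $\epsilon_\mathrm{nl}(m,0)=0$, Lemma~\ref{thm:nonlin:geom} gives $N_m$ a Lipschitz constant at most $C_\mathrm{stab}(m)C_\mathrm{nl}\Lambda_m r_m$ on that ball, so contraction with factor $\theta$ requires $r_m\le\theta/(C_\mathrm{stab}(m)C_\mathrm{nl}\Lambda_m)$; and because $N_m(0)=[I-D\Phi_m(x^*)]^{-1}\epsilon_\mathrm{c}(m)$, the self-mapping estimate $\|N_m(\delta^x)\|_{0,1}\le\theta r_m+C_\mathrm{stab}(m)\|\epsilon_\mathrm{c}(m)\|_{0,1}\le r_m$ requires $r_m\ge C_\mathrm{stab}(m)\|\epsilon_\mathrm{c}(m)\|_{0,1}/(1-\theta)$. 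I would then choose $r_m$ to be this lower bound.

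The hard part is verifying that these two requirements on $r_m$ are simultaneously satisfiable, i.e.\ that the contraction ceiling sits above the self-mapping floor, which amounts to
\[
  C_\mathrm{stab}(m)^2\,C_\mathrm{nl}\,\Lambda_m\,\|\epsilon_\mathrm{c}(m)\|_{0,1}\le\theta(1-\theta).
\]
This is precisely the place where the diverging Lebesgue factor $\Lambda_m$ carried by the nonlinearity bound of Lemma~\ref{thm:nonlin:geom} — absent in the FEM analysis — threatens the argument. The resolution is that $C_\mathrm{stab}(m)$ stays bounded by the first step, while the product $\Lambda_m\|\epsilon_\mathrm{c}(m)\|_{0,1}$ vanishes: by Lemma~\ref{thm:cons:geom} the consistency error decays like $(\Lambda_m+1)/m^{\lm}$, and since $\Lambda_m=o(m)$ with $\lm\ge2$ we get $\Lambda_m(\Lambda_m+1)/m^{\lm}\to0$ (in the analytic case $\Lambda_m\e^{-\eta m}\to0$ trivially). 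Hence beyond some degree $m_\mathrm{low}$ the compatibility holds, with $r_m\to0$, so in particular $r_m\le r_{\max}$, legitimizing the use of Lemma~\ref{thm:nonlin:geom}.

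Finally, the Banach fixed point theorem furnishes a unique $\delta^{x,m}\in\B^{0,1}_{r_m}(0)$, hence a locally unique $x^m=x^*+\delta^{x,m}$. Evaluating $N_m$ at its fixed point and using the contraction bound on the nonlinear part gives $\|\delta^{x,m}\|_{0,1}\le\theta\|\delta^{x,m}\|_{0,1}+C_\mathrm{stab}(m)\|\epsilon_\mathrm{c}(m)\|_{0,1}$, whence
\[
  \|\delta^{x,m}\|_{0,1}\le\frac{C_\mathrm{stab}(m)}{1-\theta}\,\|\epsilon_\mathrm{c}(m)\|_{0,1}.
\]
Since $1/(1-\theta)<1+\varepsilon$ and $C_\mathrm{stab}(m)\to K$, enlarging $m_\mathrm{low}$ if necessary makes $C_\mathrm{stab}(m)/(1-\theta)\le(1+\varepsilon)K$, which is exactly \eqref{thm:geom:err}. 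Substituting the two decay rates for $\|\epsilon_\mathrm{c}(m)\|_{0,1}$ from Lemma~\ref{thm:cons:geom} then delivers both the algebraic order $(\Lambda_m+1)/m^{\lm}$ and the geometric rate $\e^{-\eta m}$, as claimed.
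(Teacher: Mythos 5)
Your proposal is correct and follows essentially the same route as the paper's proof: the same reformulation as a fixed-point problem for $[I-D\Phi_m(x^*)]^{-1}[\epsilon_\mathrm{c}(m)+\epsilon_\mathrm{nl}(m,\cdot)]$, stability via Lemma~\ref{thm:stability} and the Banach perturbation lemma, the contraction/self-mapping compatibility condition $C_\mathrm{stab}^2C_\mathrm{nl}\Lambda_m\|\epsilon_\mathrm{c}(m)\|_{0,1}\to0$ resolved by Lemma~\ref{thm:cons:geom} with $\lm\geq2$ and $\Lambda_m=o(m)$, and the Banach fixed point theorem. The only (immaterial) differences are bookkeeping: you take $r_m$ to be the self-mapping floor $C_\mathrm{stab}(m)\|\epsilon_\mathrm{c}(m)\|_{0,1}/(1-\theta)$ while the paper takes the contraction ceiling $\kappa/(C_\mathrm{stab}C_\mathrm{nl}\Lambda_m)$ (yielding uniqueness in a larger ball), and you fix $\theta$ with $1/(1-\theta)<1+\varepsilon$ where the paper sets $\kappa=\varepsilon/(2(1+\varepsilon))$.
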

\begin{proof}
 Due to the need to override the growth of $\Lambda_m$ as $m\to\infty$, the radius
$r_m$ for the ball in which we have a contraction will have to depend on the
degree $m$.

Similarly to the corresponding proof in the FEM case \cite{asFEM}, consider the map
\begin{align*}
  h(\delta) = [I-D\Phi_{m}(x^*)]^{-1}[\epsilon_\mathrm{c}(m) + \epsilon_\mathrm{nl}(m,\delta)].
\end{align*}
By the identity \eqref{dfp2:split} fixed points of $h$ are fixed points
of $\Phi_m$.
We aim to find $m$ and $r_m$ such that $h$ is a contraction with some rate $\kappa\in(0,1)$. Let the target contraction rate $\kappa\in(0,1)$ be arbitrary, and fix a small $\epsilon>0$. As by Assumption~\ref{ass:linear:inv} $I-D\Phi(x^*)$ is invertible, we may introduce
\begin{align*}
  C_{\mathrm{stab},\infty}=\|[I-D\Phi(x^*)]^{-1}\|_{0,1},
\end{align*}
and choose the lower bound $m_\mathrm{stab}$ for $m$ such that
\begin{align*}
  C_{\mathrm{stab},m}:=\|[I-D\Phi_{m}(x^*)]^{-1}\|_{0,1}\leq (1+\varepsilon/2)C_{\mathrm{stab},\infty}=:C_\mathrm{stab}
\end{align*}
for all $m\geq m_\mathrm{stab}$, which is possible since, by
Lemma~\ref{thm:stability}, $\|D\Phi_{m}(x^*)-D\Phi(x^*)\|_{0,1}\to0$
for $m\to\infty$. We also choose another lower bound $m_\mathrm{c}$
for the degree $m$ such that, if $y^*$ is not analytic (the constants $C_\mathrm{nl}$, $C_\mathrm{acs}$ and $C_\mathrm{cs}$ below are defined in the proofs of Lemmas \ref{thm:nonlin:geom} and \ref{thm:cons:geom})
\begin{align}\label{proof:mcdef:slow}
  \Lambda_m\cdot\frac{\Lambda_m+1}{m^{\lm}}\leq \frac{(1-\kappa)\kappa}{C_\mathrm{stab}^2C_\mathrm{nl}C_\mathrm{cs}}\mbox{\quad for all $m\geq m_\mathrm{c}$}
\end{align}
while, if $y^*$ is analytic,
\begin{align}\label{proof:mcdef}
  \Lambda_m\e^{-\eta m}\leq \frac{(1-\kappa)\kappa}{C_\mathrm{stab}^2C_\mathrm{nl}C_\mathrm{acs}}\mbox{\quad for all $m\geq m_\mathrm{c}$.}
\end{align}
This is possible since $\lm\geq 2$, by Assumption~\ref{ass:lebesgue}, $\Lambda_m$ grows slower than $m$ and by Proposition~\ref{thm:geom:app} the geometric
convergence rate $\eta$ is positive, such that
$\Lambda_m\e^{-\eta m}\to0$ for $m\to\infty$.

For $m\geq\max\{m_\mathrm{stab},m_\mathrm{c}\}$ we can choose the radius
\begin{align}\label{proof:rmdef}
  r_m&=\frac{\kappa}{C_\mathrm{stab}C_\mathrm{nl}\Lambda_m}
    \mbox{,\quad(thus,  $C_\mathrm{stab}\|\epsilon_\mathrm{c}(m)\|_{0,1}\leq (1-\kappa)r_m$)}
\end{align}
(using \eqref{proof:mcdef:slow}-\eqref{proof:mcdef}) 
for which we plan to show that $h$ maps $\B^{0,1}_{r_m}(0)$ back into
itself and is a contraction with rate $\kappa$.

We have for $\delta^x\in \B^{0,1}_{r_m}(0)$ (using
Lemma~\ref{thm:nonlin:geom} and the definition of $r_m$ in \eqref{proof:rmdef})
\begin{align}
  \nonumber
  \|h(\delta^x)\|_{0,1}&\leq \|[I-D\Phi_m(x^*)]^{-1}\|_{C^{0,1}_\mathrm{e}\leftarrow
  C^{0,1}_\mathrm{e}}\left[\|\epsilon_\mathrm{c}(m)\|_{0,1}+\|\epsilon_\mathrm{nl}(m,\delta^x)\|_{0,1}\right]\\
                       &\leq C_\mathrm{stab}\|\epsilon_\mathrm{c}(m)\|_{0,1}+C_\mathrm{stab}C_\mathrm{nl}\Lambda_m r_m\|\delta^x\|_{0,1}\label{convproof:h:bound}\\
    \nonumber
                       &\leq (1-\kappa)r_m+\kappa \|\delta^x\|_{0,1}\leq r_m.  
\end{align}
Hence, $h$ maps $B^{0,1}_{r_m}(0)$ back into itself. For the Lipschitz constant of $h$ we have by Lemma~\ref{thm:nonlin:geom}
\begin{align*}
  \|h(\delta^{x,1})-h(\delta^{x,2})\|_{0,1}\leq C_\mathrm{stab}C_\mathrm{nl}\Lambda_m r_m\|\delta^{x,1}-\delta^{x,2}\|_{0,1}\leq \kappa \|\delta^{x,1}-\delta^{x,2}\|_{0,1}
\end{align*}
by construction of $r_m$ in \eqref{proof:rmdef}.

Hence, the map $h$ has a unique fixed point $\delta^x$ in $\B^{0,1}_{r_m}(0)$,
which equals a fixed point of $\Phi_{m}$. It satisfies
\begin{align}
  \label{delta:est}
  \|\delta^x\|_{0,1}\leq \frac{C_\mathrm{stab}}{1-\kappa}\|\epsilon_\mathrm{c}(m)\|_{0,1}=\frac{(1+\varepsilon/2)}{1-\kappa}C_{\mathrm{stab},\infty}\|\epsilon_\mathrm{c}(m)\|_{0,1}.
\end{align}
Thus, choosing $\kappa=\varepsilon/(2(1+\varepsilon))$ we obtain the estimate claimed in the theorem.
\end{proof}
\section{Numerical demonstrations}
\label{sec:tests}
The code produced to generate the results presented in this section, and in particular Figures \ref{fig:MG_conv} and \ref{fig:sdquad_conv}, is available at \url{https://github.com/AlessiaWent/sdDDE-SEM-supplementary}.

\noindent
As a first test, we consider the Mackey-Glass equation \cite{MG77,MG2010}
\begin{equation}\label{mackeyglass}
y'(t) = ay(t)+b\cfrac{y(t-\tau)}{1+[y(t-\tau)]^c}
\end{equation}
with $a=-1$, $b=2$ and $c=10$. Under this choice for the parameters, the model undergoes a Hopf bifurcation at $\tau_{\text{Hopf}}\approx 0.4708$. Starting from a sinusoidal perturbation of the equilibrium $\overline y = 1$ as an initial guess, a periodic solution was first found for $\tau=\tau_{\text{Hopf}} + 10^{-3}$ using a scalar affine condition, namely $y(0) = \overline{y}$. Then, the branch of periodic orbits was continued up to $\tau=1$.
We compute the solutions collocating on Gauss-Legendre nodes for different values of $L$ and $m$ and consider the solution $y$ at $\tau=1$. For each test example we approximate the residual of the FDE, divided by the period $T$, 
\begin{align}
\label{mg:residual}    
\err(y,T,p):=\max_{t\in[0,1]}\left|y'(t)/T-\gfde(y(t+(\cdot)/T),p)\right|,
\end{align} 
by considering the maximum of the residuals on a uniform grid of $10001$ points along the solution profile $y$ as shown in the left plot of Figure \ref{fig:MG_conv} for the Mackey-Glass equation \eqref{mackeyglass}. The straight lines demonstrate the geometric convergence, and the improving exponent $\eta$ for increasing $L$.
\begin{figure}[t]
\centering
\begin{subfigure}[t]{0.4\textwidth}
    \includegraphics[scale=1.1]{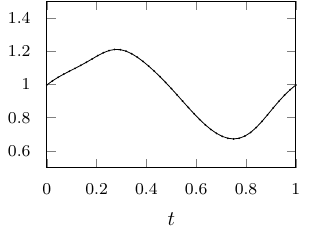}
\end{subfigure}%
~\begin{subfigure}[t]{0.55\textwidth}
\includegraphics[scale=1.1]{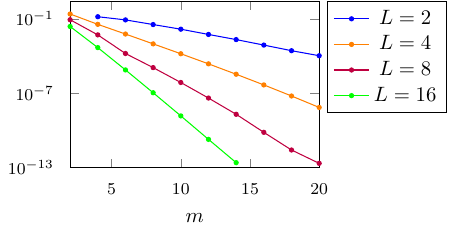}%
\end{subfigure}
\caption{Left: periodic solution computed with $L=11$ and $m=4$, rescaled to period $1$. Right: residual $\err(y,T,\tau)$ for \eqref{mackeyglass}, computed at $10001$ equidistant points in $[0,1]$ for different values of $m$ and $L$, in linear-log scale.}
\label{fig:MG_conv}
\end{figure}
We perform our next test on the BVP
\begin{equation}\label{eq:quadratic}
y'(t) = -y\left(t-\tau-y(t)-y(t)^2\right)
\end{equation}
for $\tau=0.95$ and $\tau=1.1$. As a starting guess for the Newton iterations we choose the solution computed with \textsc{DDE-Biftool}, unadapted mesh with $L=100$ and $m=10$. We recompute the solution collocating on Gauss-Legendre nodes, using different values of $L$ and $m$ and approximate $\err(y,T,\tau)$ along the profiles shown in the left plot of Figure \ref{fig:sdquad_conv}. In contrast to Figure~\ref{fig:MG_conv}, it is not as clear whether the convergence observed is geometric, that is, it is hard to tell whether the lines have a uniform downward slope. Note that the results by Mallet-Paret and Nussbaum \cite{MPN14} are not enough to prove that either of the computed periodic solutions is analytic through any point. Numerically, we observed that the circle map \eqref{circle_map} has periodic points of period $5$ in the case $\tau=0.95$, which are unstable. Mallet-Paret and Nussbaum \cite{MPN14} suggest for this case that the solution obtained is likely not analytic. However, note that the solution is of class $C^{\infty}$ since the coefficients of \eqref{eq:quadratic} are. Thus, by  Theorem \ref{res:thm:conv}, the order of convergence observed in the middle plot of Figure \ref{fig:sdquad_conv} is also compatible with a solution being $C^{\infty}$ but not analytic.
\begin{figure}
~\begin{subfigure}[t]{0.35\textwidth}
\includegraphics[scale=0.9]{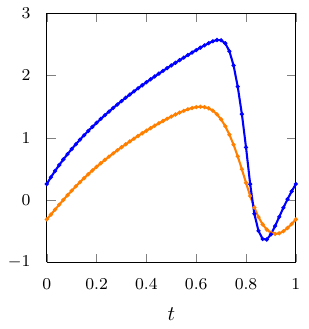}%
\end{subfigure}
\begin{subfigure}[t]{0.38\textwidth}
    \includegraphics[scale=0.9]{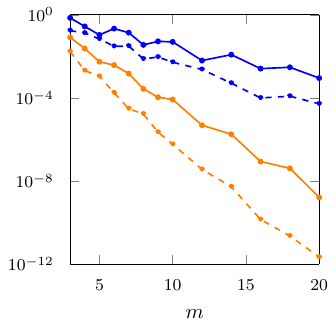}
\end{subfigure}
~\begin{subfigure}[t]{0.2\textwidth}
\includegraphics[scale=0.9]{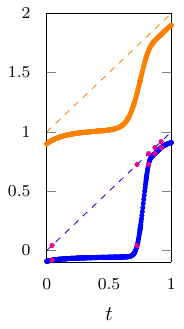}%
\end{subfigure}%
\caption{Left: Periodic solution of BVP defined by \eqref{eq:quadratic} at $\tau = 0.95$ (blue) and $\tau = 1.1$ (orange) computed with $L=12$ and $m=5$, rescaled to period $1$. Middle: $\err(y,T,\tau)$ computed at $10001$ equidistant points in $[0,1]$ for $L=10$ (solid) and $L=20$ (dashed), in linear-log scale. Right: first iterates of the circle map \eqref{circle_map} modulo $[0,1]$. The five fixed points of the fifth iterate for $\tau=0.95$ are shown in magenta.}
\label{fig:sdquad_conv}
\end{figure}
\section{Concluding remarks}
\label{s_concluding}
Collocation on piecewise orthogonal polynomials is used for computation of periodic solutions of FDEs in publicly available libraries such as \textsc{DDE-Biftool} \cite{ED02,ELHR01,ELR02,ddebiftoolmanual}, \textsc{knut} \cite{RS07,SSH06} and \textsc{coco} \cite{ahsan2022methods,DS13}. While the convergence analysis of the FEM strategy in this context has already been completed for constant delay by And{\`o} and Breda \cite{andoSIAM2020} and state-dependent delays by And{\`o} and Sieber \cite{asFEM}, the question concerning the convergence of the spectral element method (SEM) had been open, even in the case of constant delay. The present paper bridges this gap by proving that SEM converges with a rate depending on the regularity of the periodic solution. In particular SEM converges geometrically if the solution is analytic.

State-dependent delays often appear in renewal (Volterra integral) equations in the form of threshold delays in population growth models \cite{diekmann2010daphnia,gedeon2022operon}. The present analysis for proving higher-order convergence for periodic BVPs may also be applicable to renewal equations, either for FEM or for SEM.

\section*{Acknowledgments}
A.\,A.\ is a member of INdAM Research group GNCS. This work was supported by the Italian Ministry of University and Research (MUR) through the PRIN 2020 project (No. 2020JLWP23) “Integrated Mathematical Approaches to Socio–Epidemiological Dynamics”, Unit of Udine (CUP G25F22000430006). The research collaboration was supported by the Lorentz Center Leiden (The Netherlands) workshop ``\emph{Towards rigorous results in state-dependent delay equations}'',
4--8 March 2024.
\bibliographystyle{abbrvnat}
\bibliography{delay}

\end{document}

\begin{equation}\label{}
\left\{
\begin{aligned}
A &= (-1)^k \frac{4}{k\pi} \sin \left(\frac{k \pi}{2}\right) \frac{\gamma}{2} (1-2\theta) A, \\
\theta &= \gamma \theta (1-\theta) -\frac{\gamma}{2}A^{2}.
\end{aligned}
\right.
\end{equation}

\begin{align}
\mathcal{K}(\alpha_{1},\alpha_{2})&\coloneqq e^{\int_{\alpha_{2}}^{\alpha_{1}}g_{1}(\theta)d\theta}g_{2}(\alpha_{2}),\label{K}\\
\mathcal{Klambda}(\alpha_{1},\alpha_{2})&\coloneqq -\mathcal{F}(\alpha_{1},\bar{TB})\left(\int_{\alpha_{2}}^{\alpha_{1}}\mu_{1}(\theta)\mathcal{K}(\theta,\alpha_{2})d\theta+\mu_{2}(\alpha_{2})\right).\label{Klambda}
\end{align}

\begin{equation*}
\begin{split}
z(t+4)={}& \int_{0}^4 A(\tau) h(z(t+4-\tau)) {\rm d} \tau \\
={}& \int_{0}^4 A(4-\sigma)h(z(t+\nu)){\rm d} \sigma \\
={}& \int_{0}^4 A(\tau) h(z(t+\tau)) {\rm d} \tau \\
={}& \int_{0}^4 A(\tau) h(z(-t-\tau)) {\rm d} \tau \\
={}& z(-t),
\end{split}
\end{equation*}

\begin{equation}\label{equation1}
\begin{split}
x'&=x(\alpha-\beta y),\\
y'&=-y(\gamma-\delta x).
\end{split}
\end{equation}

\begin{equation}\label{equation2}
\begin{split}
\dot{y}(t) ={}& a y(t)+b f(y(t-\gamma_{0}\tau)))\\
>{}& a y(t)-b f(y(t-\gamma_{0}\tau))\\
& +b f(y(t-\gamma_{0}\tau))\\
={}& a y(t).
\end{split}
\end{equation}

\begin{align}
\dot{y}(t) &= a y(t)+b f(y(t)),\label{equation3a}\\
\ddot{y}(t) &= a y(t) - b f(y(t)),\label{equation3b}\\
y(0) &= y_{0}.\label{equation3c}
\end{align}

\begin{equation}\label{equation4}
|x|=\begin{cases}
x,\quad &x \geq 0,\\
-x, \quad &x<0.
\end{cases}
\end{equation}

\begin{equation*}
\begin{split}
\alpha x(t)+\beta y(t)= {} &\int_{t_{0}}^{t}f(x(s),y(s-\tau))\,ds-\int_{t_{0}}^{t}g(x(s-\tau),y(s))\,ds\\
&+\int_{t_{0}}^{t}h(x(s-\tau),y(s-\tau))\,ds.
\end{split}
\end{equation*}

\begin{figure}[!h]
\centering
\includegraphics[width=\textwidth]{}
\caption{caption. See text for more details.}\label{f_E}
\end{figure}

\section{}
\label{s_}
\subsection{}
\label{s_}

{\color{red} ()}
